\author[1]{Jean Carlos Nakasato\thanks{nakasato@ime.usp.br}}
\author[1]{Marcone Corr\^ea Pereira\thanks{marcone@ime.usp.br}}
\affil[1]{Depto. Matem\'atica Aplicada, IME, Universidade de S\~ao Paulo,
Rua do Mat\~ao 1010, S\~ao Paulo - SP, Brazil}
\date{}
\title{The $p$-Laplacian in oscillating thin domains}
\newtheorem{teo}{Theorem}[section]
\newtheorem{lema}[teo]{Lemma}
\newtheorem{prop}[teo]{Proposition}
\newtheorem{cor}[teo]{Corollary}
\newtheorem{obs}{Remark}
\begin{document}
	\maketitle

	\begin{abstract}
		In this paper we study the asymptotic behavior of the solutions of the $p$-Laplacian equation posed in a 2-dimensional domain that degenerates into a line segment when a positive parameter $\varepsilon$ goes to zero (a thin domain perturbation). Also, we notice that high oscillatory behavior on the upper boundary of the thin domain is allowed as $\varepsilon \to 0$. Combining methods from classic homogenization theory and monotone operators we obtain the homogenized equation proving convergence of the solutions and establishing a corrector function which guarantees strong convergence in $W^{1,p}$ for $1<p<+\infty$.
	\end{abstract}

	\noindent \emph{Keywords:} $p$-Laplacian, monotone operators, Neumann boundary condition, thin domains, homogenization \\
	\noindent 2010 \emph{Mathematics Subject Classification.} Primary: 35B25, 35B40; Secondary: 35J92.

	\section{Introduction}
	In this work, we are interested in analyzing the asymptotic behavior of solutions of a nonlinear elliptic problem posed in a thin domain with high oscillating behavior on its boundary.
	
	In order to state the problem, let $g:\mathbb{R}\rightarrow \mathbb{R}$ be a function of class $\mathcal{C}^1$, $L$-periodic, positive with $0<g_0\leq g(x)\leq g_1$ for all $x\in \mathbb{R}$, where 
	$$
	g_0=\displaystyle\min_{x\in\mathbb{R}}g(x) 
	\quad \textrm{ and } \quad 
	g_1=\displaystyle\max_{x\in\mathbb{R}}g(x).
	$$ 
	
	Consider the bounded open set defined by
	\begin{equation*}
	R^{\varepsilon}=\left\{(x,y)\in\mathbb{R}^2:x\in(0,1)\mbox{ and }0<y<\varepsilon g(x/\varepsilon)\right\}
	\end{equation*}
	with $\varepsilon>0$ arbitrary. Notice that $R^{\varepsilon} \subset (0,1) \times (0,\varepsilon g_1)$ for any $\varepsilon>0$, and then, it sets a 2-dimensional thin domain as $\varepsilon \to 0$ since in some sense converges to the unit interval $(0,1) \subset \mathbb{R}$.
	
	In $R^{\varepsilon}$, we consider the following nonlinear elliptic problem with Neumann boundary condition 
	\begin{equation}\label{problem}
	\left\{ \begin{array}{ll}
	-\Delta_p w^{\varepsilon}+|w^{\varepsilon}|^{p-2}w^{\varepsilon}=h^\varepsilon\mbox{ in }R^\varepsilon,\\
	|\nabla w^{\varepsilon}|^{p-2}\nabla w^{\varepsilon} \cdot \nu^\varepsilon=0\mbox{ on }\partial R^\varepsilon,
	\end{array}\right.
	\end{equation}
	where $\nu^\varepsilon$ is the unit outward normal to $\partial R^\varepsilon$, $h^\varepsilon \in L^{p'}(R^\varepsilon)$, $1<p<+\infty$ with 
	$p^{-1} + {p'}^{-1} = 1$, and
	\begin{equation*}
	\Delta_p w^\varepsilon :=\partial_x\left(\left|\nabla w^\varepsilon\right|^{p-2}\partial_x w^\varepsilon\right)+\partial_y\left(\left|\nabla w^\varepsilon\right|^{p-2}\partial_yw^\varepsilon\right)
	\end{equation*} 
	denotes the  $p$-Laplacian operator. 
	We call \eqref{problem} the $p$-Laplacian equation with Neumann boundary condition. 
	It follows from Minty-Browder Theorem that it has an unique solution for each $\varepsilon>0$.
	
	Here we perform the asymptotic analysis of the problem \eqref{problem}.
	We obtain a homogenized equation to \eqref{problem} analyzing the convergence of the solutions as $\varepsilon$ goes to zero showing as the oscillating thin domain affects this quasilinear equation.
	
	In order to do that, we first perform the change of variables 
	$(x_1, x_2) = (x, y/\varepsilon)$
	which transforms the domain $R^\varepsilon$ into
	\begin{equation*}
	\Omega^\varepsilon=\left\{(x_1,x_2)\in \mathbb{R}^2: 0<x_1<1\mbox{ and } 0<x_2<g(x_1/\varepsilon)\right\}.
	\end{equation*}
	By doing so, we do not have a thin domain any more, even if it presents a high oscillatory behavior. 
	Indeed, in the oscillating domain $\Omega^\varepsilon$, we now consider the following problem
	\begin{equation}\label{problemchanged}
	\left\{ \begin{array}{ll}
	\displaystyle -\Delta_p^{\varepsilon^2} u_\varepsilon+|u_\varepsilon|^{p-2}u_\varepsilon=f^\varepsilon\mbox{ in }\Omega^\varepsilon,\\
	\displaystyle |\nabla^\varepsilon u_\varepsilon|^{p-2}\partial_{x_1}u_\varepsilon N_1^\varepsilon+\frac{1}{\varepsilon^2}|\nabla^\varepsilon u_\varepsilon|^{p-2}\partial_{x_2}u_\varepsilon N_2^\varepsilon=0\mbox{ on }\partial\Omega^\varepsilon,
	\end{array}\right.
	\end{equation}
	where  
	\begin{equation}\label{plaplaceepsilon}
	\begin{gathered}
	\Delta_p^{\varepsilon^2}u_\varepsilon:=\partial_{x_1}\left(|\nabla^\varepsilon u_\varepsilon|^{p-2}\partial{x_1}u_\varepsilon\right)+\frac{1}{\varepsilon^2}\partial_{x_2}\left(|\nabla^\varepsilon u_\varepsilon|^{p-2}\partial_{x_2}u_\varepsilon\right), \\
	\nabla^\varepsilon \cdot = (\partial_{x_1} \cdot, \varepsilon^{-1}\partial_{x_2} \cdot),  
	\end{gathered}
	\end{equation}
	and $N^\varepsilon=(N_1^\varepsilon,N_2^\varepsilon)$ is the unit outward normal to $\partial\Omega^\varepsilon$. 	

	It is not difficult to see that problems \eqref{problem} and \eqref{problemchanged} are equivalent. 
	Also, we notice that the variational formulation to \eqref{problemchanged} is the following one 
	\begin{equation*}
	\int_{\Omega^\varepsilon}\left\{ \left|\nabla^\varepsilon u_\varepsilon\right|^{p-2}\nabla^\varepsilon u_\varepsilon \nabla^\varepsilon \varphi+|u_\varepsilon|^{p-2}u_\varepsilon\varphi \right\} dx_1dx_2 = \int_{\Omega^\varepsilon} f^\varepsilon\varphi \, dx_1 dx_2 
	\end{equation*}	
	for any $\varphi \in W^{1,p}(\Omega^\varepsilon)$ with $\nabla^\varepsilon \cdot$ set in \eqref{plaplaceepsilon}.
	
	In our analysis, we take forcing terms $f^\varepsilon\in L^{p'}(\Omega^\varepsilon)$ uniformly bounded in $\varepsilon$.
	Indeed, we assume that the sequence $\hat f^\varepsilon \in L^{p'}(0,1)$ defined by
	\begin{equation} \label{fhat}
	\hat f^\varepsilon(x_1) = \int_0^{g(x_1/\varepsilon)} f^\varepsilon(x_1,x_2) \, dx_2
	\quad\mbox{ satisfies }\quad
	\hat f^\varepsilon \rightharpoonup \hat f \textrm{ weakly in } L^{p'}(0,1), \quad \textrm{ as } \varepsilon \to 0, 
	\end{equation}
	for some function $\hat f \in L^{p'}(0,1)$.  We point out that $h^{\varepsilon}$ and $f^{\varepsilon}$ are related with a simple change of variables, that is, $f^{\varepsilon}(x_{1},x_{2})=h(x_{1},\varepsilon x_{2})$. As an example for the forcing term, one can take $f^\varepsilon(x_1,x_2)=f(x_1)$, $f\in L^{p'}(0,1)$.  

	Now observe that the coefficient ${1}/{\varepsilon^2}$ in front of the second term of \eqref{plaplaceepsilon} corresponds to a high diffusion mechanism in the $x_2$-direction as $\varepsilon \to 0$. 
	Indeed, because of this very strong diffusion mechanism, we expect that the solutions become homogeneous in the $x_2$-direction as $\varepsilon$ goes to zero. Thus, the limiting solution of the problem will not get dependence on the $x_2$-variable, and then, the limiting problem will be 1-dimensional which is in agreement with the intuitive idea that an equation in a thin domain should approach an equation in a line segment.
	
	Since we are considering here a boundary perturbation problem, we need an approach to deal with functions whose domain varies. In fact, we need to set a notion of convergence in order to establish our homogenized equation.

	In a certain way, we have that the oscillating domain $\Omega^\varepsilon$ fills the entire rectangle 
	$$\Omega:=(0,1)\times(0,g_1)$$ 
	at $\varepsilon=0$. 
	Hence, we can expect that the solutions of \eqref{problemchanged} should converge to the solutions of a limit equation sets in $\Omega$. 
	Then, since the limit solution will not depend on the variable $x_2$, we will obtain a 1-dimensional equation as result.
	
	Thus, we need to compare functions defined in $\Omega^\varepsilon$ with functions set in the rectangle $\Omega$. 
	Here, we will use the extension operator approach deeply applied in homogenization theory \cite{cioranescu2012, palencia1980non, Tartar2009}. 
	We will get a bounded operator $P_\varepsilon$ which transforms functions defined in $\Omega^\varepsilon$ in functions set in $\Omega$.
	In fact, we will get convergence in $W^{1,p}(\Omega)$ using the extension operator $P_\varepsilon:W^{1,p}(\Omega^\varepsilon)\rightarrow W^{1,p}(\Omega)$ introduced in \cite{arrieta2011} (see Lemma \ref{lema3.1} below).
	
	We show that the solutions $u_\varepsilon \in W^{1,p}(\Omega^\varepsilon)$ of \eqref{problemchanged} satisfy   
	$$
	P_\varepsilon u_\varepsilon \rightharpoonup u_0\mbox{ weakly in } W^{1,p}(\Omega), \quad \textrm{ as } \varepsilon \to 0, 
	$$
	where $u_0(x_1,x_2)=u_0(x_1)$ is the unique solution of the 1-dimensional equation 
	\begin{eqnarray} \label{limitintro} 
	\left\lbrace\begin{array}{ll}
	-q\left(|u'_0|^{p-2}u'_0\right)'+|u_0|^{p-2}u_0=\bar{f} \quad \mbox{ in }(0,1),\\
	u'_0(0)=u'_0(1)=0,
	\end{array}
	\right.
	\end{eqnarray}
	where the set $Y^*$ denotes the representative cell of the oscillating domain $\Omega^\epsilon$
	\begin{equation*}  
	Y^*=\left\lbrace \right(y_1,y_2) \in \mathbb{R}^2 : 0<y_1<L\mbox{ and }0<y_2<g(y_1)\rbrace.
	\end{equation*} 
	and $\bar f \in L^{p'}(0,1)$ is given by 
	$$
	\bar f = \frac{L}{|Y^*|} \hat f
	$$
	with $\hat f$ introduced in \eqref{fhat}. The homogenized coefficient $q$ is the constant defined by  
	\begin{equation*} 
	q=\frac{1}{|Y^*|}\int_{Y^*} |\nabla v|^{p-2}\partial_{y_1}v \, dy_1dy_2
	\end{equation*}
	where function $v$ is the unique solution of the auxiliar problem:
		\begin{equation}\label{auxint}
		\begin{gathered}
		\int_{Y^*} |\nabla v|^{p-2}\nabla v\nabla \varphi=0\mbox{, } \quad \forall \varphi\in W^{1,p}_{per}(Y^*) \\
		\textrm{ with } \qquad (v - \, y_1) \in W^{1,p}_{per}(Y^*).
		\end{gathered}
		\end{equation}

	The Banach space $W_{per}^{1,p}(Y^*)$ denotes 
	the functions in $W^{1,p}(Y^*)$ whose trace on the lateral faces of $Y^*$ are equal and possess average zero. 
	That is, if $\partial_{left} Y^*$ and $\partial_{right} Y^*$ are respectively the left and right side of the boundary $\partial Y^*$ and 
	$$< \varphi>_{\mathcal{O}} := \frac{1}{|\mathcal{O}|} \int_{\mathcal{O}} \varphi(x) \, dx$$
	is the average of $\varphi \in L^1_{loc}(\mathbb{R}^2)$ on an open bounded set $\mathcal{O} \subset \mathbb{R}^2$, we take
	$$
	W_{per}^{1,p}(Y^*) = \{ \varphi\in W^{1,p}(Y^*) \, : \,  \varphi |_{\partial_{left} Y^*} = \varphi|_{\partial_{right} Y^*} \textrm{ with } < \varphi>_{Y^*} = 0 \}.
	$$
	Also, $|\mathcal{O}|$ denotes the Lebesgue measure of any measurable set $\mathcal{O} \subset \mathbb{R}^2$.
	
	Note that the functions $\varphi \in W_{per}^{1,p}(Y^*)$ can be periodically extended to the horizontal direction, in such way that 
	$\varphi(y_1+L,y_2) = \varphi(y_1,y_2)$ for all $y_1 \in \mathbb{R}$ and $0<y_2<g(y_1)$. 
	The existence and uniqueness of the solutions of \eqref{auxint} is also a consequence of Minty-Browder Theorem.
	

	
	To accomplish our goal, we use techniques from \cite{arrieta2011} and \cite{donato1990}. In \cite{arrieta2011}, the authors have considered this same singular boundary perturbation problem for the Laplacian operator ($p=2$); in \cite{donato1990}, a monotone operator in a periodically perforated domain for a class of operators such that the $p$-Laplacian equation fits in is studied. Here, we combine these techniques to set appropriated test functions to identify the homogenized limit and show convergence. We rigorously derive an effective $1$-dimensional model as $\varepsilon$ goes to zero. Moreover, using the corrector approach discussed in \cite{MasoA}, we construct a family of correctors which allow us to obtain strong convergence in $W^{1,p}(\Omega^\varepsilon)$.

	We observe that the same issues can be considered to oscillating thin domains in $\mathbb{R}^{N+1}$ with $N\geq2$. The same arguments can be performed to $R^\varepsilon$ defined by a positive and periodic function $g: \omega \mapsto \mathbb{R}$ where $\omega \subset \mathbb{R}^{N}$ is a cube as $\omega = (0,L_1) \times ... \times (0,L_N)$. We assume $N=1$ just to simplify the notations and proofs.

	In the literature one can find several works concerned with partial differential equations posed in thin domains. Indeed, it is not difficult to realize that they can occur in many applications. For instance, they can be found in mathematical models for ocean dynamics (where one is dealing with fluid regions which are thin compared to the horizontal length scales), lubrication, nanotechnology, blood circulation, material engineering, meteorology, etc. 
	Many techniques and methods have been developed in order to understand the effect of the geometry and thickness of the domain on the solutions of such singular problems.
	
	From pioneering works to recent ones we mention  \cite{SM, HaleRaugel, PR, EI, FMP, MPov, AM, AM2} concerned with elliptic and parabolic equations, as well as \cite{A, DI, BFN, FKTW, XL, MVel, BPazanin, MGrau} where the authors considered Stokes and Navier-Stokes equations from fluid mechanics. 
	The second author also have studied different classes of thin domains problems for elliptic and parabolic equations.
	For instance we mention the recent works \cite{AP10, MAMPA, MRi, SMarcone, MZAMP}. See also \cite{GH, MJ, MJ2} where nonlocal equations in thin structures have been considered. Recently, we also studied the same problem and proved the results here presented with a different approach, see \cite{nakasato1}.
	
	For monotone operators in standard thin domains, that is, those ones without oscillating boundary, we mention the recent works \cite{MRi2, Ri} where thin channels in $\mathbb{R}^N$ where considered. 
	

	Finally we notice that different conditions on the lateral boundaries of the thin domain may be set preserving the Neumann type boundary condition on upper and lower boundary of $R^\varepsilon$. Dirichlet or even Robin homogeneous can be set, and then, the limit problem will preserve this boundary condition as a point condition. On the other hand, as we know from \cite{MRi}, if we suppose Dirichlet boundary condition in whole $\partial R^\varepsilon$, the family of solutions will converge to the null function as the parameter $\varepsilon \to 0$.
	
	The paper is organized as follows: in Section 2, we collect some basic facts to monotone operators and introduce the extension operator $P_\varepsilon$. In Section 3, we prove our main result concerned to the convergence of the solutions and the homogenized equation. Finally, we obtain a corrector result in Section 4.
	
	\section{Preliminary Results}
	
	Here, we recall some results that will be useful in the next sections. 
	We start with some ones concerned to the $p$-laplacian operator (see \cite{lindqvist}).
	
	\begin{prop}\label{proposicaoplaplaciano}
		Let $x,y\in\mathbb{R}^n$.
		\begin{itemize}
			\item If $p> 2$, then
			\begin{equation*} 
			<|x|^{p-2}x-|y|^{p-2}y,x-y> \geq c_p|x-y|^p.
			\end{equation*}
			\item If $1<p<2$, then
			\begin{eqnarray*} 
			<|x|^{p-2}x-|y|^{p-2}y,x-y>\geq c_p|x-y|^2(|x|+|y|)^{p-2} \geq c_p|x-y|^2(1+|x|+|y|)^{p-2}.
			\end{eqnarray*}
		\end{itemize}
	\end{prop}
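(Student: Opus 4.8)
The plan is to write $F(z):=|z|^{p-2}z$, which is the gradient of the convex potential $z\mapsto \tfrac1p|z|^p$, and to estimate $\langle F(x)-F(y),x-y\rangle$ by integrating the Hessian of that potential along the segment joining $y$ to $x$. Setting $z(t):=y+t(x-y)$ for $t\in[0,1]$, the fundamental theorem of calculus gives
\[
\langle F(x)-F(y),x-y\rangle \;=\; \int_0^1 \big\langle DF(z(t))(x-y),\,x-y\big\rangle\,dt ,
\]
and at any point $z\neq 0$ a direct computation yields $DF(z)=|z|^{p-2}\,\mathrm{Id}+(p-2)|z|^{p-4}\,z\otimes z$, so that $\langle DF(z)\xi,\xi\rangle = |z|^{p-2}|\xi|^2 + (p-2)|z|^{p-4}\langle z,\xi\rangle^2$ for every $\xi\in\mathbb{R}^n$. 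The two regimes of $p$ differ only in how this quadratic form is bounded below.

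If $p\geq 2$, the second term is nonnegative, hence $\langle DF(z)\xi,\xi\rangle \geq |z|^{p-2}|\xi|^2$ and
\[
\langle F(x)-F(y),x-y\rangle \;\geq\; |x-y|^2\int_0^1 |z(t)|^{p-2}\,dt .
\]
It then suffices to bound the last integral below by $c_p|x-y|^{p-2}$; by homogeneity one may assume $|x-y|=1$, and then $t\mapsto z(t)$ parametrizes a segment of length one traversed at unit speed, so the chord-length estimate for the ball $B_{1/4}(0)$ shows $|\{t\in[0,1]:|z(t)|\geq 1/4\}|\geq 1/2$; on that set $|z(t)|^{p-2}\geq 4^{\,2-p}$ since $p\geq 2$, which gives the claim with $c_p=\tfrac12\,4^{\,2-p}$.

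If $1<p<2$, the coefficient $p-2$ is negative, so Cauchy--Schwarz ($\langle z,\xi\rangle^2\leq|z|^2|\xi|^2$) gives $\langle DF(z)\xi,\xi\rangle \geq (p-1)|z|^{p-2}|\xi|^2$, whence
\[
\langle F(x)-F(y),x-y\rangle \;\geq\; (p-1)\,|x-y|^2\int_0^1 |z(t)|^{p-2}\,dt .
\]
Since $|z(t)| = |(1-t)y+tx| \leq |x|+|y|$ and $s\mapsto s^{p-2}$ is decreasing on $(0,\infty)$, the integral is at least $(|x|+|y|)^{p-2}$, proving the first estimate with $c_p=p-1$; the second one is then immediate, as $1+|x|+|y|\geq|x|+|y|$ forces $(|x|+|y|)^{p-2}\geq(1+|x|+|y|)^{p-2}$.

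The only point needing care is that $F$ fails to be $C^1$ at the origin when $1<p<2$, and the segment $[y,x]$ may pass through $0$. I expect this to be the one genuine obstacle, and would handle it in the standard way: replace $F$ by the smooth regularization $F_\delta(z)=(\delta+|z|^2)^{(p-2)/2}z$, observe that the same lower bounds for $\langle DF_\delta(z)\xi,\xi\rangle$ hold with constants independent of $\delta$, deduce both inequalities for $F_\delta$, and let $\delta\to 0$, the limit being harmless because $|z(t)|^{p-2}$ is integrable on $[0,1]$ (as $p-2>-1$); alternatively one first treats the case $0\notin[y,x]$, where the fundamental theorem of calculus applies verbatim, and recovers the general case by continuity. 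The trivial degeneracies ($x=y$, and $x=y=0$ in the first $1<p<2$ inequality, read with the convention $0\cdot(+\infty)=0$) are disposed of separately.
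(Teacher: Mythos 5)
The paper states Proposition~\ref{proposicaoplaplaciano} without proof; it is a standard quantitative monotonicity estimate for the map $z\mapsto|z|^{p-2}z$ that is quoted in the homogenization and $p$-Laplacian literature rather than re-derived. So there is no in-text proof to compare yours against, and what matters is whether your argument is sound.

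It is. You use the canonical route: identify $F(z)=|z|^{p-2}z$ as $\nabla\bigl(\tfrac1p|z|^p\bigr)$, write $\langle F(x)-F(y),x-y\rangle=\int_0^1\langle DF(z(t))(x-y),x-y\rangle\,dt$ along the segment $z(t)=y+t(x-y)$, and bound the Hessian quadratic form $\langle DF(z)\xi,\xi\rangle=|z|^{p-2}|\xi|^2+(p-2)|z|^{p-4}\langle z,\xi\rangle^2$ from below. For $p\geq2$ the cross term is nonnegative, and your geometric observation that at most half of a unit-speed, unit-length segment can lie inside $B_{1/4}(0)$ correctly yields $\int_0^1|z(t)|^{p-2}\,dt\geq\tfrac12\,4^{2-p}|x-y|^{p-2}$ after reducing by homogeneity; here $F$ is $C^1$ on all of $\mathbb{R}^n$ (including the origin, since $p\geq2$), so the fundamental theorem of calculus applies directly. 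For $1<p<2$ Cauchy--Schwarz gives the factor $(p-1)$, and the pointwise bound $|z(t)|\leq|x|+|y|$ together with monotonicity of $s\mapsto s^{p-2}$ gives the claim; your regularization $F_\delta(z)=(\delta+|z|^2)^{(p-2)/2}z$ handles the failure of $C^1$ smoothness at the origin cleanly, because the lower bound $(p-1)(\delta+|z|^2)^{(p-2)/2}|\xi|^2$ is uniform in $\delta$ and both sides of the final inequality pass to the limit $\delta\to0$. The second inequality in the $1<p<2$ case is, as you note, an immediate consequence of $s\mapsto s^{p-2}$ being decreasing. Your treatment of the degenerate cases ($x=y$, and $x=y=0$ with the convention $0\cdot\infty=0$) is the standard way to read the statement. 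In short, this is a correct and complete proof of a result the paper leaves to the reader.
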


	\begin{cor}\label{corolarioplaplaciano}
		Let $a_p:\mathbb{R}^n\rightarrow \mathbb{R}^n$ defined by $a_p(s)=|s|^{p-2}s$ and $p' > 1$ such that $\frac{1}{p}+\frac{1}{p'}=1$. Then, $a_{p}$ is the inverse of $a_{p'}$. Moreover,
		\begin{itemize}
			\item If $1<p'< 2$ (i.e, $p\geq 2$), then
			\begin{equation*}
			\left||u|^{p'-2}u-|v|^{p'-2}v\right|\leq c|u-v|^{p'-1}.
			\end{equation*}
			\item If $p'\geq 2$ (i.e, $1<p\leq 2$), then
			\begin{eqnarray*}
			\left||u|^{p'-2}u-|v|^{p'-2}v\right|\leq c|u-v|(|u|+|v|)^{p'-2}\leq c|u-v|(1+|u|+|v|)^{p'-2}.
			\end{eqnarray*}
		\end{itemize}
	\end{cor}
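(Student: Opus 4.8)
The plan is to obtain the asserted upper bounds for the map $a_{p'}$ by \emph{dualizing} the coercivity (lower) bounds for $a_p$ given in Proposition~\ref{proposicaoplaplaciano}, using that $a_{p'}=a_p^{-1}$ together with the Cauchy--Schwarz inequality. First I would verify the inverse relation: writing $\tfrac1p+\tfrac1{p'}=1$ in the form $p+p'=pp'$ (equivalently $(p-1)(p'-1)=1$), a one-line computation gives, for every $s\in\real^n$,
\[
a_p\bigl(a_{p'}(s)\bigr)=|s|^{(p'-1)(p-2)}\,|s|^{p'-2}s=|s|^{\,pp'-p-p'}\,s=s,
\]
and symmetrically $a_{p'}\circ a_p=\mathrm{id}$. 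Injectivity of $a_p$ (which makes all of this well posed) is immediate from Proposition~\ref{proposicaoplaplaciano}, since both displayed inequalities there force $\inner{a_p(x)-a_p(y),\,x-y}>0$ whenever $x\neq y$.

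Next, fixing $u\neq v$ in $\real^n$ (the case $u=v$ being trivial), I would set $U=a_{p'}(u)$ and $V=a_{p'}(v)$, so that $a_p(U)=u$, $a_p(V)=v$ and $U\neq V$. In the case $p\ge2$, applying the first inequality of Proposition~\ref{proposicaoplaplaciano} to $x=U,\ y=V$, using $a_p(U)-a_p(V)=u-v$ and Cauchy--Schwarz,
\[
|u-v|\,|U-V|\ \ge\ \inner{u-v,\,U-V}\ \ge\ c_p\,|U-V|^{p},
\]
whence $|U-V|^{p-1}\le c_p^{-1}|u-v|$; raising to the power $p'-1=1/(p-1)$ yields precisely $|a_{p'}(u)-a_{p'}(v)|\le c\,|u-v|^{p'-1}$, the first assertion.

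For $1<p<2$ the same manipulation, now with the second inequality of Proposition~\ref{proposicaoplaplaciano}, gives $|u-v|\,|U-V|\ge c_p\,|U-V|^2(|U|+|V|)^{p-2}$, hence (dividing by $|U-V|>0$, noting $|U|+|V|>0$) $|U-V|\le c_p^{-1}|u-v|\,(|U|+|V|)^{2-p}$. Since $|U|=|u|^{p'-1}$, $|V|=|v|^{p'-1}$ and $p'-1\ge1$, I would use $|u|^{p'-1}+|v|^{p'-1}\le(|u|+|v|)^{p'-1}$, then raise to the positive power $2-p$ and invoke the exponent identity $(p'-1)(2-p)=p'-2$ (again a consequence of $p+p'=pp'$) to get $(|U|+|V|)^{2-p}\le(|u|+|v|)^{p'-2}$. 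This gives $|a_{p'}(u)-a_{p'}(v)|\le c\,|u-v|\,(|u|+|v|)^{p'-2}$, and since $p'-2\ge0$ the right-hand side is $\le c\,|u-v|\,(1+|u|+|v|)^{p'-2}$, as claimed.

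The computation is short; the only delicate point is the case $1<p<2$, where one must combine the elementary inequality $|u|^{p'-1}+|v|^{p'-1}\le(|u|+|v|)^{p'-1}$ with the identity $(p'-1)(2-p)=p'-2$ in the correct order so as to convert the weight $(|U|+|V|)^{2-p}$ — which naturally lives on the ``$a_p$ side'' — back into a power of $|u|+|v|$. Everything else (the inverse relation, Cauchy--Schwarz, and the trivial case $u=v$) is routine.
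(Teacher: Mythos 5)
The paper states Corollary~\ref{corolarioplaplaciano} without proof, treating it as a standard companion to Proposition~\ref{proposicaoplaplaciano}, so there is no in-paper argument to compare against; I can only assess your proposal on its own. Your duality argument is correct. The exponent identity $(p-1)(p'-1)=1$ immediately gives $a_{p'}=a_p^{-1}$ as you compute, and the Cauchy--Schwarz reduction $|u-v|\,|U-V|\ge\langle a_p(U)-a_p(V),\,U-V\rangle$ correctly converts the \emph{lower} (coercivity) bounds of Proposition~\ref{proposicaoplaplaciano} into the desired \emph{upper} (H\"older/Lipschitz-type) bounds for $a_{p'}$. The only step that requires care is, in the case $p'\ge 2$, converting the weight $(|U|+|V|)^{2-p}$ back to $(|u|+|v|)^{p'-2}$, and you do this correctly: $|U|=|u|^{p'-1}$ and $|V|=|v|^{p'-1}$, the superadditivity $a^\alpha+b^\alpha\le(a+b)^\alpha$ for $a,b\ge 0$ and $\alpha=p'-1\ge 1$, and the identity $(p'-1)(2-p)=p'-2$ combine exactly as needed; and $|U|+|V|>0$ whenever $u\neq v$, so the division is licit. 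As an alternative, one could prove the corollary directly, independently of the proposition, by applying the fundamental theorem of calculus to $t\mapsto a_{p'}\bigl(v+t(u-v)\bigr)$ and using $|Da_{p'}(s)|\le C|s|^{p'-2}$; that route is self-contained but requires a small amount of differential calculus on $a_{p'}$. Your inversion argument avoids that and genuinely derives the statement \emph{from} Proposition~\ref{proposicaoplaplaciano}, which is well in the spirit of its being labeled a corollary.
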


	Now let us introduce a lemma concerning the existence of an extension operator which will be used to transform functions defined in $\Omega^\varepsilon$ into functions given in the fixed domain $\Omega$. This lemma will be very important in the proof our main result. 
	
	\begin{lema}[Extension Operator]\label{lema3.1}
		Let 
		\begin{eqnarray*}
			\mathcal{O}&=&\left\{(x_1,x_2)\in \mathbb{R}^2:x_1\in I\mbox{ and }0<x_2<G_1\right\}\\
			\mathcal{O}^\varepsilon&=& \left\{(x_1,x_2)\in \mathbb{R}^2:x_1\in I\mbox{ and }0<x_2<G_\varepsilon(x_1)\right\},
		\end{eqnarray*}
		where $I\subset \mathbb{R}$ is an open interval, $G_\varepsilon:I\rightarrow\mathbb{R}$ is a $C^1$-function satisfying 
		$$0<G_0\leq G_\varepsilon(x_{1})\leq G_1 \textrm{ for all } x \in I \textrm{ an } \varepsilon>0.$$ 
		Then, there exists an extension operator
		\begin{equation*}
		P_\varepsilon\in\mathcal{L}\left(W^{1,p}(\mathcal{O}^\varepsilon),W^{1,p}(\mathcal{O})\right)\cap\mathcal{L}\left(L^p(\mathcal{O}^\varepsilon),L^p(\mathcal{O})\right)\cap \mathcal{L}\left(W^{1,p}_{\partial_l}(\mathcal{O}^\varepsilon),W^{1,p}_{\partial_l}(\mathcal{O})\right)
		\end{equation*}
		where $W^{1,p}_{\partial_l}$ is is the set of functions in $W^{1,p}$ which are zero on the lateral boundaries. 
		
		Moreover, there exists a constant $K$ independent of $\varepsilon$ a $p$ such that  
		\begin{eqnarray*}
			\left\{\begin{array}{lll}
				\left|\left|P_\varepsilon\varphi\right|\right|_{L^p(\mathcal{O})}\leq K\left|\left|\varphi\right|\right|_{L^p(\mathcal{O}^\varepsilon)}\\
				\left|\left|\partial_{x_1}P_\varepsilon\varphi\right|\right|_{L^p(\mathcal{O})}\leq K\left(\left|\left|\partial_{x_1}\varphi\right|\right|_{L^p(\mathcal{O}^\varepsilon)}+\eta(\varepsilon)\left|\left|\partial_{x_2}\varphi\right|\right|{}_{L^p(\mathcal{O}^\varepsilon)}\right)\\
				\left|\left|\partial_{x_2}P_\varepsilon\varphi\right|\right|_{L^p(\mathcal{O})}\leq K\left|\left|\partial_{x_2}\varphi\right|\right|_{L^p(\mathcal{O}^\varepsilon)}
			\end{array} \right.
		\end{eqnarray*}
		for all function $\varphi\in W^{1,p}(\mathcal{O}_\varepsilon)$ with $1\leq p\leq\infty$, and $\eta$ given by 
		\begin{equation*}
		\eta(\varepsilon)=\sup_{x\in I}\left(\left|G_\varepsilon'(x)\right|\right).
		\end{equation*}
	\end{lema}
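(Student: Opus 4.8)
The plan is the classical one for thin domains with oscillating boundary (following the construction of \cite{arrieta2011}): straighten $\mathcal{O}_\varepsilon$ onto a fixed rectangle by a change of variables, extend by a fixed even--periodic reflection in the \emph{rescaled} vertical variable, and then bookkeep the constants through the chain rule. Since $G_\varepsilon(x_1)\le G_1$ we have $\mathcal{O}_\varepsilon\subset\mathcal{O}$, so the only thing to do is to fill in the region $\{G_\varepsilon(x_1)<x_2<G_1\}$.

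\emph{Step 1 (flattening).} Define $\Phi_\varepsilon:\mathcal{O}_\varepsilon\to Q:=I\times(0,1)$ by $\Phi_\varepsilon(x_1,x_2)=(x_1,\,x_2/G_\varepsilon(x_1))$. This is a $C^1$--diffeomorphism with inverse $(y_1,y_2)\mapsto(y_1,\,y_2 G_\varepsilon(y_1))$ and Jacobian $1/G_\varepsilon(x_1)\in[G_1^{-1},G_0^{-1}]$. Given $\varphi\in W^{1,p}(\mathcal{O}_\varepsilon)$, put $\psi:=\varphi\circ\Phi_\varepsilon^{-1}\in W^{1,p}(Q)$. The point is the asymmetry of the chain rule: $\partial_{y_2}\psi=G_\varepsilon\cdot(\partial_{x_2}\varphi)\circ\Phi_\varepsilon^{-1}$ involves only $G_\varepsilon$ itself, whereas $\partial_{y_1}\psi$ also involves $G_\varepsilon'(y_1)$, and this is exactly the origin of the factor $\eta(\varepsilon)$ in the final estimate.

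\emph{Step 2 (reflection).} Fix the integer $M:=\lceil G_1/G_0\rceil$, which depends only on $G_0,G_1$; note $G_1\le M G_0\le M G_\varepsilon(x_1)$. Let $E\psi$ be the even--periodic extension of $\psi$ from $Q$ to $Q_M:=I\times(0,M)$ (reflect across $y_2=1$, periodize with period $2$, restrict to $(0,M)$). Even reflection across a flat face preserves $W^{1,p}$, the traces match across each line $y_2\in\{1,2,\dots\}$, and $\partial_{y_2}(E\psi)$ equals $\pm\,\partial_{y_2}\psi$ composed with the (piecewise affine, slope $\pm1$) sawtooth map, so $\|E\psi\|_{W^{1,p}(Q_M)}\le c_M\|\psi\|_{W^{1,p}(Q)}$ with $c_M$ depending only on $M$; for $L^p$ alone one has $\|E\psi\|_{L^p(Q_M)}\le M\|\psi\|_{L^p(Q)}$ for all $1\le p\le\infty$. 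Since $\Phi_\varepsilon^{-1}$ extends to a $C^1$--diffeomorphism of $Q_M$ onto $\{0<x_2<M G_\varepsilon(x_1)\}\supseteq\mathcal{O}$, we set
$$
P_\varepsilon\varphi:=\big(E\psi\big)\circ\Phi_\varepsilon\Big|_{\mathcal{O}}.
$$
By construction $P_\varepsilon\varphi=\varphi$ on $\mathcal{O}_\varepsilon$; the formula involves only compositions, hence it also defines a bounded operator on $L^p$; and since no reflection acts on the $x_1$--variable, the trace on the lateral boundaries is untouched, so $P_\varepsilon$ maps $W^{1,p}_{\partial_l}(\mathcal{O}_\varepsilon)$ into $W^{1,p}_{\partial_l}(\mathcal{O})$.

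\emph{Step 3 (estimates) and the main difficulty.} Applying the chain rule through $\Phi_\varepsilon$ and $\Phi_\varepsilon^{-1}$: the $L^p$-- and $\partial_{x_2}$--norms of $P_\varepsilon\varphi$ are controlled by those of $\varphi$ with constants that are powers of $G_1/G_0$ and of $M$ (from the Jacobians and the finite reflection count), with exponents $1$ or $1/p\le1$, hence absorbed into a single $K=K(G_0,G_1)$, independent of $\varepsilon$ and of $p$; crucially no term involving $G_\varepsilon'$ appears here. Differentiating in $x_1$ produces exactly one extra term, coming from $\partial_{x_1}\big(x_2/G_\varepsilon(x_1)\big)$, of the form $(\text{bounded})\cdot G_\varepsilon'(x_1)\cdot(\partial_{x_2}\varphi)(\cdots)$, whose $L^p$--norm is $\le c\,\eta(\varepsilon)\,\|\partial_{x_2}\varphi\|_{L^p(\mathcal{O}_\varepsilon)}$ after the change of variables; this yields the middle inequality. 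The only genuinely delicate part of the argument is precisely this bookkeeping: organizing the reflection in the rescaled, $\varepsilon$--independent variable $y_2$ (so that all reflection lines are flat and fixed), never differentiating $G_\varepsilon$ except through that single chain-rule term, and checking that all remaining constants are $p$--uniform up to the endpoints $p=1,\infty$. Everything else (well-posedness of the even reflection, matching of traces, and the inclusion $\mathcal{O}\subset\{x_2<M G_\varepsilon(x_1)\}$, which is the defining property of $M$) is routine. $\square$
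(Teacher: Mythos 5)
The paper does not actually prove this lemma; it simply cites \cite{arrieta2011} and \cite{AP10}. Your construction is, up to presentation, the one used in those references: extend by reflection across the oscillating boundary, organized here by first flattening $\mathcal{O}_\varepsilon$ onto the fixed rectangle $Q=I\times(0,1)$ via $\Phi_\varepsilon(x_1,x_2)=(x_1,x_2/G_\varepsilon(x_1))$, then doing an even--periodic reflection in the flat variable $y_2$, then mapping back. This is a clean way to handle the case $G_1>2G_0$ (which requires iterated reflections, addressed only in a remark in \cite{AP10}), and the bookkeeping you describe does deliver the stated bounds with $K$ depending only on $G_0,G_1$ and uniform in $p\in[1,\infty]$ and $\varepsilon$. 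The lateral--boundary and $L^p$ claims are immediate from the fact that $\Phi_\varepsilon$ fixes $x_1$ and the reflection is purely in $y_2$.

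One spot worth tightening: in Step 3 you say that differentiating in $x_1$ ``produces exactly one extra term, coming from $\partial_{x_1}(x_2/G_\varepsilon(x_1))$.'' In fact, writing $E\psi(y_1,y_2)=\psi(y_1,\sigma(y_2))$ with $\sigma$ the period-$2$ triangle wave, the chain rule through $\Phi_\varepsilon^{-1}$ \emph{and} $\Phi_\varepsilon$ produces two $G_\varepsilon'$ contributions — one hidden inside $\partial_{y_1}\psi$ (which you correctly flagged in Step 1) and one from the outer factor $\partial_{x_1}(x_2/G_\varepsilon)$ — and they combine into
\[
\partial_{x_1}(P_\varepsilon\varphi)(x_1,x_2)=\partial_{x_1}\varphi\bigl(x_1,\sigma(y_2)G_\varepsilon(x_1)\bigr)+G_\varepsilon'(x_1)\,\bigl[\sigma(y_2)-y_2\,\sigma'(y_2)\bigr]\,\partial_{x_2}\varphi\bigl(x_1,\sigma(y_2)G_\varepsilon(x_1)\bigr),
\]
with $y_2=x_2/G_\varepsilon(x_1)$. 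The bracket is $0$ on $(0,1)$ (so $P_\varepsilon\varphi=\varphi$ on $\mathcal{O}_\varepsilon$ is consistent) and bounded by a constant depending only on $M$ on $(0,M)$, which is what produces the $\eta(\varepsilon)$ factor. So the conclusion is correct, but the ``exactly one extra term'' phrasing understates where the cancellation on $\mathcal{O}_\varepsilon$ comes from; worth spelling out if this is to serve as a self-contained proof.
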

	\begin{proof}
		For a proof see \cite{arrieta2011} or \cite{AP10}.
	\end{proof}
	
	\begin{obs} 
		$(i)$  This operator preserves periodicity in the first variable: if the function $\varphi_\varepsilon(x_1,x_2)$ is periodic in $x_1$, then 
		the extended function $P_\varepsilon \varphi_\varepsilon$ is also periodic in $x_1$. 
		
		$(ii)$ We also can use this lemma to the case $G_\varepsilon(x_1)=G(x_1)$ independent of $\varepsilon$. In particular, we can
		apply the extension operator to the basic cell $Y^*$. 
	\end{obs}
	
	\section{Convergence theorem}

	Now we are in condition to show the main result of the paper. 
	\begin{teo}\label{maintheorem}
		Let $u_\varepsilon$ be the sequence of solutions of problem \eqref{problemchanged} 
		with $f^\varepsilon \in L^{p'}(\Omega^\varepsilon)$ satisfying 
		$\| f^\varepsilon \|_{L^{p'}(\Omega^\varepsilon)} \le C$ for some positive constant $C$ independent of $\varepsilon > 0$. 
		Assume that the function 
		$$
		\hat f^\varepsilon(x_1)=\int_0^{g(x_1/\varepsilon)} f^\varepsilon(x_1,x_{2})dx_{2}
		$$ 
		satisfies $\hat f^\varepsilon \rightharpoonup \hat f$, weakly in $L^{p'}(0,1)$, as $\varepsilon \to 0$. 
		
		Then, there exist a function $u_0(x_1,x_2) = u_0(x_1) \in W^{1,p}(0,1)$ and an extension operator $P_\varepsilon:W^{1,p}(\Omega^\varepsilon)\rightarrow W^{1,p}(\Omega)$ such that 
		$$
		P_\varepsilon u_\varepsilon\rightharpoonup u_0, \quad \textrm{ weakly in } W^{1,p}(\Omega). 
		$$

		Moreover, we have that the function $u_0$ is the solution of the one dimensional $p$-Laplacian problem with constant coefficient 
		\begin{equation} \label{limitV}
		q\int_0^1 \,|u'_0|^{p-2}u'_0  \psi' dx_1+\int_{0}^1 |u_0|^{p-2}u_0 \psi dx_1=\int_0^1\overline{f}\psi dx_1,
		\qquad \forall \psi\in W^{1,p}(0,1),
		\end{equation}
		where
		\begin{equation}\label{defq}
		q=\frac{1}{|Y^*|}\int_{Y^*}\left|\nabla v\right|^{p-2}\partial_{y_1}v \, dy_{1}dy_2,
		\end{equation}
		and $\bar f \in L^{p'}(0,1)$ is the forcing term given by 
		$$
		\bar f = \frac{L}{|Y^*|} \hat f.
		$$
		The set $Y^*$ is the representative cell of $\Omega^\varepsilon$ 	
		and the function $v$ is the unique solution of the auxiliary problem \eqref{auxint}.

	\end{teo}

		\begin{obs}
		It is worth noting that our result also sets the homogenized problem to the Laplacian operator in oscillating thin domains as has been done in \cite[Theorem 4.3]{arrieta2011}. We have just to take $p=2$ in Theorem \ref{maintheorem}.
	\end{obs}

		\begin{obs}
 		We still emphasize that the limit equation \eqref{limitV}  is the one-dimensional $p$-Laplacian equation with constant coefficient $q$ and we also note that it is well defined. Indeed, it follows from \eqref{auxint} that the homogenized coefficient $q$ is positive. If we take $\varphi = v-y_1$ as a test function in \eqref{auxint}, since $v \neq 0$ in $L^p(Y^*)$, we get 
		$$
		q |Y^*| = \int_{Y^*} |\nabla v|^p > 0.
		$$
		 
		\end{obs}

	\begin{proof}

	        The proof of Theorem \ref{maintheorem} will consist of three steps.
		First, we show that the solutions are uniformly bounded obtaining convergent subsequences. 
		Next we introduce an appropriated auxiliar partition to $\Omega^\varepsilon$ to identify the homogenized equation and define auxiliar functions which will help us to pass to the limit. 
		Finally, we pass to the limit in the problem obtaining the desired result.  
		
\vspace{.2cm}

		\textbf{Uniform bounds of solutions}
		
		Let us recall that the variational formulation of problem \eqref{problemchanged} is
		\begin{equation}\label{variacionalproblemchanged}
		\int_{\Omega^\varepsilon}\left|\nabla^\varepsilon u_\varepsilon\right|^{p-2}\nabla^\varepsilon u_\varepsilon \nabla^\varepsilon \varphi+|u_\varepsilon|^{p-2}u_\varepsilon\varphi= \int_{\Omega^\varepsilon} f^\epsilon \, \varphi, \quad \forall \varphi\in W^{1,p}(\Omega^\varepsilon)
		\end{equation}
		with $\nabla^\varepsilon \cdot = (\partial_{x_1} \cdot, \varepsilon^{-1}\partial_{x_2} \cdot)$. 
		Hence, if $u_\varepsilon$ is the solution of \eqref{problemchanged}, and we take $\varphi=u_\varepsilon$ in \eqref{variacionalproblemchanged} to obtain  
		\begin{eqnarray*}
			||u_\varepsilon||_{W^{1,p}(\Omega^\varepsilon)}^p\leq \int_{\Omega^\varepsilon}|\nabla^\varepsilon u_\varepsilon|^p+|u_\varepsilon|^p\leq ||f^\varepsilon||_{L^{p'}(\Omega^\varepsilon)} ||u_\varepsilon||_{W^{1,p}(\Omega^\varepsilon)}, \quad \forall \varepsilon \in (0,1).
		\end{eqnarray*}
		Thus, $u_\varepsilon$ is uniformly limited in $W^{1,p}(\Omega^\varepsilon)$, and then, there exists $c>0$, independent of $\varepsilon$, such that 
		\begin{eqnarray*} 
		\left|\left|\partial_{x_1} u_\varepsilon\right|\right|_{L^p(\Omega^\varepsilon)} \leq c \quad \textrm{ and } \quad 
		\frac{1}{\varepsilon}\left|\left|\partial_{x_2} u_\varepsilon\right|\right|_{L^p(\Omega^\varepsilon)} \leq c.\label{limitacaouepsilonsegundavariavel}
		\end{eqnarray*}
		
		Now, let $P_\varepsilon$ be the extension operator given by Lemma \ref{lema3.1}. 
		In order to obtain the homogenized equation, we rewrite problem \eqref{variacionalproblemchanged} to the fixed domain $\Omega=(0,1)\times(0,g_1)$ in the following way
		\begin{equation}\label{extendedvariationalproblem}
		\int_\Omega\left(\widetilde{a_{p}(\nabla^{\varepsilon}u_{\varepsilon})}\nabla^\varepsilon \varphi+\chi_{\Omega^\varepsilon} \left|P_\varepsilon u_\varepsilon\right|^{p-2} P_\varepsilon u_\varepsilon \varphi \right)=\int_\Omega \chi_{\Omega^\varepsilon} f^\varepsilon\varphi
		\end{equation}
		for all $\varphi\in W^{1,p}(\Omega)$ where $a_p(s)=|s|^{p-2}s$ is the function introduced by Corollary \ref{corolarioplaplaciano}, $\widetilde{\cdot}$ denotes the extension by zero and $\chi_{\Omega^\varepsilon}$ is the characteristic function of $\Omega^\varepsilon$. 
		
		Since $u_\varepsilon$ is uniformly bounded, it follows from Lemma \ref{lema3.1} that
		\begin{eqnarray}\label{limitacaopuepsilon}
		\left\{\begin{array}{lll}
		\left|\left|P_\varepsilon u_\varepsilon\right|\right|_{L^{p}(\Omega)}\leq c,\\
		\left|\left|\partial_{x_1}P_\varepsilon u_\varepsilon\right|\right|_{L^{p}(\Omega)}\leq c, \\
		\left|\left|\partial_{x_2}P_\varepsilon u_\varepsilon\right|\right|_{L^{p}(\Omega)}\leq c \varepsilon,
		\end{array}
		\right.
		\end{eqnarray}
		for some positive constant $c$ independent of $\varepsilon$. 
		Also, we have that
		\begin{equation}\label{limitacaooperador}
		\left|\left|\widetilde{\left.a_{p}(\nabla^{\varepsilon}u_{\varepsilon})\right.}\right|\right|_{\left(L^{p'}(\Omega)\right)^2}
		\leq c.
		\end{equation}
		
		Therefore, from \eqref{limitacaopuepsilon} and \eqref{limitacaooperador}, 
		we can extract a subsequence, still denoted in the same way, such that,
		for some functions $u_0 \in W^{1,p}(\Omega)$ and $a_0 \in L^{p'}(\Omega)\times L^{p'}(\Omega)$, we have  
		\begin{eqnarray} \label{901}
		\left\{\begin{array}{lll}
		P_\varepsilon u_\varepsilon \rightharpoonup u_0\mbox{ weakly in }W^{1,p}(\Omega),\\
		P_\varepsilon u_\varepsilon \rightarrow u_0\mbox{ strongly in }L^p(\Omega ),\\
		\widetilde{a_{p}(\nabla^{\varepsilon}u_{\varepsilon})}\rightharpoonup a_0\mbox{ weakly in }L^{p'}(\Omega)\times L^{p'}(\Omega)\label{conva00}.
		\end{array}
		\right.
		\end{eqnarray}
		
		Indeed, due to \eqref{limitacaopuepsilon} and \eqref{conva00}, we obtain that $u_0(x_1,x_2)=u(x_1)$. In fact,
		\begin{eqnarray*}
			\int_{\Omega}u_0\partial_{x_2}\varphi dx_1dx_2=\lim_{\varepsilon\rightarrow 0}\int_\Omega P_\varepsilon u_\varepsilon\partial_{x_2}\varphi dx_1dx_2
			=-\lim_{\varepsilon\rightarrow 0} \int_\Omega\partial_{x_2} P_\varepsilon u_\varepsilon\varphi=0
		\end{eqnarray*}
		for all $\varphi\in C_0^\infty(\overline{\Omega})$. Then,
		\begin{equation} \label{dnula}
		\partial_{x_2}u_0=0\mbox{ a.e. in }\Omega
		\end{equation}
		and $u_0 \in W^{1,p}(0,1)$.
		
		Next, we show that 
		\begin{equation} \label{eq870}
		\chi_{\Omega^\varepsilon}\stackrel{*}{\rightharpoonup}\theta \quad \mbox{ weakly star in }L^\infty(\Omega), 
		\end{equation}
		where 
		$$
		\theta(x_2) := \frac{1}{L} \int_0^L \chi_{Y^*}(s,x_2) ds \textrm{ a.e. } x_2 \in (0,g_1) 
		$$
		and $\chi_{Y^*}$ is the characteristic function of $Y^*$.
		Also, we note that $\theta$ satisfies
		\begin{equation} \label{Y^*}
		L\int_0^{g_1}\theta(x_2)dx_2=|Y^*|.
		\end{equation}
		
		In fact, if we extend $\chi_{Y^*}$ periodically to the horizontal direction $x_1$, we get that
		\begin{equation} \label{chip}
		\chi_{\Omega^\varepsilon}(x_1,x_2) 
		= \chi_{Y^*}\left(\frac{x_1}{\epsilon},x_2\right) \textrm{ in } \Omega.
		\end{equation}
		Then, due to \eqref{chip}, we have that 
		\begin{equation} \label{chi0}
		\chi_{\Omega^\varepsilon}( \cdot, x_2) \stackrel{\varepsilon\to 0}{\rightharpoonup} \theta(x_2)  
		\quad \textrm{ weakly star in } L^\infty(0,1), 
		\end{equation}
		for all $x_2\in (0,g_1)$. Hence, from (\ref{chi0}), we have that 
		$$
		H^\varepsilon(x_2) := \int_0^1  \varphi(x_1,x_2) \, \Big\{ \chi_{\Omega^\varepsilon}(x_1,x_2) - \theta(x_2) \Big\} \, dx_1 
		\to 0 
		$$ 
		as $\varepsilon \to 0$, a.e. $x_2 \in (0, g_1)$, and for all $\varphi \in L^1(\Omega)$.
		Thus, due to
		$$
		\begin{gathered}
		\int_{\Omega} \varphi(x_1,x_2) \,\Big\{ \chi_{\Omega^\varepsilon}(x_1,x_2) - \theta(x_2) \Big\} \, dx_1 dx_2 
		= \int_0^{g_1} H^\varepsilon_i(x_2) dx_2 \\ 
		\textrm{ and }  |H^\varepsilon(x_2)| \le \int_0^1 | \varphi(x_1,x_2)| dx_1,
		\end{gathered}
		$$
		we get \eqref{eq870} from Lebesgue's Dominated Convergence Theorem.

		Now, using \eqref{conva00}, Corollary \ref{corolarioplaplaciano} and a H\"older's inequality, one can conclude that
		\begin{eqnarray*}
		\int_\Omega \left|\left|P_\varepsilon u_\varepsilon\right|^{p-2}P_\varepsilon u_\varepsilon -\left|u_0\right|^{p-2}u_0\right|^{p'}dx_1dx_2\rightarrow 0.
		\end{eqnarray*}
		Thus, we can conclude that
		\begin{equation}\label{Pepsilonuepsilon}
		\chi_{\Omega^\varepsilon} \left|P_\varepsilon u_\varepsilon\right|^{p-2} P_\varepsilon u_\varepsilon \rightharpoonup \theta |u_0|^{p-2}u_0
		\mbox{ weakly in }  L^{p'}(\Omega).
		\end{equation}

		Notice that we can pass to the limit in \eqref{extendedvariationalproblem} taking test functions depending just on the first variable, 
		that is, taking $\varphi(x_1,x_2) = \varphi(x_1) \in W^{1,p}(0,1)$ in \eqref{extendedvariationalproblem}. 
		Indeed, we obtain from \eqref{extendedvariationalproblem}, \eqref{fhat}, \eqref{901}, \eqref{dnula} and \eqref{Pepsilonuepsilon} that  
		\begin{eqnarray} \label{pseudolimit}
		\int_\Omega \left\{  a_0(x_1,x_2) \cdot (\partial_{x_1} \varphi(x_1) , 0) + \theta(x_2) |u_0(x_1)|^{p-2} u_0(x_1) \, \varphi(x_1)  \right\} dx_1 dx_2  
		 = \int_0^1 \varphi(x_1) \hat f(x_1) \, dx_1
		\end{eqnarray}
		for all $\varphi \in W^{1,p}(0,1)$, since by \eqref{fhat}, we have 
		\begin{eqnarray*}
			\int_\Omega \chi_{\Omega_\varepsilon} f^\varepsilon \varphi \, dx_1 dx_2 & = & \int_0^1 \varphi(x_1) \int_0^{g(x_1/\varepsilon)} f^\varepsilon(x_1,x_2) \, dx_2 dx_1
			 \to  \int_0^1 \varphi(x_1) \hat f(x_1) \, dx_1.
		\end{eqnarray*}
		
		Thus, we get from \eqref{Y^*} and \eqref{pseudolimit} that 
		\begin{equation}\label{prehomogenized}
		\begin{gathered} 
		\int_0^1 \left\{ \left( \int_0^{g_1} a_0 \cdot (1,0) \, dx_2 \right) \partial_{x_1} \varphi + \frac{|Y^*|}{L} |u_0|^{p-2} u_0 \varphi \right\} dx_1  = \int_0^1 \varphi(x_1) \hat f(x_1) \, dx_1
		\end{gathered}
		\end{equation}
		for all $\varphi \in W^{1,p}(0,1)$ where $(1,0)$ is the first vector of the canonical basis of $\mathbb{R}^2$.

		\begin{obs}
			Our goal now is to identify function 
			$$\int_0^{g_1} a_0 \cdot (1,0) \, dx_2 $$ 
			from the limit \eqref{conva00} and \eqref{prehomogenized}. We will show that  
			$$\int_0^{g_1} a_0 \cdot (1,0) \, dx_2  = \int_0^{g_1}b(\partial_{x_1} u_0, x_2)\cdot(1,0) \, dx_2$$ 
			where 
		$b:\mathbb{R}\times (0,g_1)\rightarrow \mathbb{R}^2$ is given by 
		\begin{equation} \label{homogenizedb}
		b(\xi,x_2)=  \frac{a_p(\xi)}{L}\int_0^L \chi_{Y^*}(s,x_2) \, a_p((1,0)+\nabla P\phi (s,x_2)) \, ds,
		\end{equation}
		$P$ is the extension operator given by Lemma \ref{lema3.1} to the open cell $Y^*$, $a_p(s) = |s|^{p-2}s$ and $\phi = v - y_1$ is the function set by the auxiliary problem \eqref{auxint}.
		
		Moreover, we will get the following convergence 
		\begin{equation} \label{lim01}
		\int_0^{g_1} \left|\widetilde{\nabla^\varepsilon u_\varepsilon}\right|^{p-2}\widetilde{\nabla^\varepsilon u_\varepsilon}  \cdot (1,0) dx_2
		\rightharpoonup \int_0^{g_1} b(\partial_{x_1} u_0,x_2) \cdot (1,0) dx_2
		\end{equation} 
		weakly in  $L^{p'}(0,1)$ where $ \tilde \cdot$ is the standard extension by zero and 
		$\nabla^\varepsilon \cdot = (\partial_{x_1} \cdot, \varepsilon^{-1}\partial_{x_2} \cdot)$.

			For this sake, we will proceed as in \cite{arrieta2011, donato1990, FM}. 
			We introduce an appropriated partition to $\Omega$, 
			as well as, some auxiliary sequences which will allow us to achieve this goal.
		\end{obs} 
		
				\vspace{.5cm}
		
		\noindent\textbf{Auxiliar Partition}

		For all $\nu\in\mathbb{N}$, we consider the partition $(A_{i\nu})$ of $\Omega$ in rectangles $A_{i\nu}$ such that its base has length $2^{-\nu}$ and height $g_1$. Then,
		\begin{equation} \label{Anu}
		{\rm int}\left( \bigcup_{i=0}^{2^\nu-1} \bar A_{i\nu} \right) =\Omega.
		\end{equation}

		Now, take a function $w_0\in W^{1,p}(0,1)$, and for each $i$, consider the average 
		$$<\partial_{x_1}w_0>_{i\nu} = \frac{1}{|A_{i\nu}|} \int_{A_{i\nu}} \partial_{x_1}w_0(x_1) \, dx_1 dx_2.$$ 
		Next, using the solution $v$ of \eqref{auxint}, set
		$$
		v_{i\nu}(y_1,y_2) = <\partial_{x_1}w_0>_{i\nu} \, v(y_1,y_2) 
		 =  <\partial_{x_1}w_0>_{i\nu} \left(y_1+\phi(y_1,y_2)\right) \quad \textrm{ in } Y^* 
		$$
		for some $\phi \in W^{1,p}_{per}(Y^*)$.

		By Lemma \ref{lema3.1}, there exists an extension operator $P$ such that $P\phi\in W^{1,p}_{per}(Y)$, where $Y=(0,L)\times(0,g_1)$. 
		Extend $P\phi$ periodically in the first variable and define
		\begin{equation*} 
		w_{i\nu}(y_1,y_2)=<\partial_{x_1}w_0>_{i\nu}\left(P\phi(y_1,y_2)+y_1\right)\mbox{, }\forall (y_1,y_2)\in\mathbb{R}\times (0,g_1).
		\end{equation*}	
		
		Consider the sequence 
		\begin{eqnarray}   \label{wepsi}
		w_{i\nu}^\varepsilon(x_1,x_2)=\varepsilon w_{i\nu}\left(\frac{x_1}{\varepsilon},x_2\right)
		=\varepsilon <\partial_{x_1}w_0>_{i\nu}P\phi\left(\frac{x_1}{\varepsilon},x_2\right)+< \partial_{x_1}w_0>_{i\nu}x_1,
		\end{eqnarray}
		where $(x_1,x_2)\in \Omega$.
		
		It is not difficult to prove that (see for instance \cite[page 5521, item (d)]{arrieta2011} for the proof)
		\begin{eqnarray}\label{winuconv}
		\left\lbrace \begin{array}{lll}
		w_{i\nu}^\varepsilon\rightarrow <\partial_{x_1}w_0>_{i\nu}x_1\mbox{ in }L^p(\Omega),\\
		\partial_{x_1}w_{i\nu}^\varepsilon\rightharpoonup <\partial_{x_1}w_0>_{i\nu}\mbox{ in }L^p(\Omega),\\
		\partial_{x_2}w_{i\nu}^\varepsilon\rightarrow 0 \mbox{ in } L^p(\Omega),
		\end{array}\right.
		\end{eqnarray}
		when $\varepsilon\rightarrow 0$.
		
		Now, consider $\nabla P \phi$ and define
		\begin{equation*}
		W_{i\nu}(y_1,y_2)= <\partial_{x_1}w_0>_{i\nu}\nabla P \phi(y_1,y_2)+\left(< \partial_{x_1}w_0>_{i\nu},0\right)
		\end{equation*}
		and
		\begin{eqnarray}\label{ainu}
		a_{i\nu}(y_1,y_2) & = & \chi_{Y^*}(y_1,y_2)\left|W_{i\nu}(y_1,y_2)\right|^{p-2}W_{i\nu}(y_1,y_2) \nonumber \\
		& = & \chi_{Y^*}(y_1,y_2)a_p\left(< \partial_{x_1}w_0>_{i\nu}\right)a_p\left(\nabla P\phi(y_1,y_2)+(1,0)\right),
		\end{eqnarray}
		where $(y_1,y_2)\in \mathbb{R}\times (0,g_1)$.
		Next, consider the sequences
		\begin{eqnarray*}
		W_{i\nu}^\varepsilon(x_1,x_2)= W_{i\nu}\left(\frac{x_1}{\varepsilon},x_2\right)
		=<\partial_{x_1}w_0>_{i\nu}\nabla P \phi\left(\frac{x_1}{\varepsilon},x_2\right)+\left(< \partial_{x_1}w_0>_{i\nu},0\right)
		\end{eqnarray*}
		and        
		\begin{equation}\label{aepsilon}
		\begin{gathered}
		a^\varepsilon_{i\nu}(x_1,x_2)=a_{i\nu}\left(\frac{x_1}{\varepsilon},x_2\right)=\chi_{\Omega^\varepsilon}(x_1,x_2)\left|W_{i\nu}^\varepsilon(x_1,x_2)\right|^{p-2}W_{i\nu}^\varepsilon(x_1,x_2)\\
		=a_p\left(< \partial_{x_1}w_0>_{i\nu}\right) \chi_{\Omega^\varepsilon}(x_1,x_2) a_p\left(\nabla P\phi \left(\frac{x_1}{\varepsilon},x_2\right)+(1,0)\right)
		\end{gathered}
		\end{equation}
		defined in  $\Omega$.

		Remember that $\nabla^\varepsilon = (\partial_{x_1}\cdot,\varepsilon^{-1}\partial_{x_2}\cdot)$. Thus, due to \eqref{wepsi}, we get 
		\begin{equation*}
		\nabla w_{i\nu}^\varepsilon(x_1,x_2)=\left(\partial_{x_1}w_{i\nu}\left(\frac{x_1}{\varepsilon},x_2\right),\varepsilon\partial_{x_2}w_{i\nu}\left(\frac{x_1}{\varepsilon},x_2\right)\right),
		\end{equation*}
		and then,  
		$$
		\nabla^\varepsilon w_{i\nu}^\varepsilon(x_1,x_2) = \left(\partial_{x_1}w_{i\nu}\left(\frac{x_1}{\varepsilon},x_2\right),\partial_{x_2}w_{i\nu}\left(\frac{x_1}{\varepsilon},x_2\right)\right).
		$$
		
		Therefore, comparing $\nabla^\varepsilon w_{i\nu}^\varepsilon$ with $W_{i\nu}^\varepsilon$, we obtain 
		\begin{equation}\label{Wwinu}
		\nabla^\varepsilon w_{i\nu}^\varepsilon(x_1,x_2)=W_{i\nu}^\varepsilon(x_1,x_2)\mbox{ in }\Omega^\varepsilon.
		\end{equation}
		
		By \cite[Theorem 2.6]{cioranescu2000}, it follows from \eqref{aepsilon} that  
		\begin{eqnarray}\label{bsum00}
	\nonumber	a^\varepsilon_{i\nu}(\cdot,x_2)&\rightharpoonup & \frac{1}{L}\int_{0}^{L}a_{i\nu}(s,x_2)ds\\
		&=&a_p\left(<\partial_{x_1}w_0>_{i\nu}\right)\frac{1}{L}\int_0^L\chi_{Y^*}(s,x_2)a_p\left(\nabla P\phi(s,x_2)+(1,0)\right) ds
		\end{eqnarray} 
		weakly in $L^{p'}_{loc}(\mathbb{R})$ a.e. $x_2\in (0,g_1)$, when $\varepsilon\rightarrow 0$, 
		where $a_{i\nu}$ was defined in \eqref{ainu}.
		
		Moreover, if $\chi_{i\nu}$ is the characteristic function of the $\left(\frac{i}{2^\nu},\frac{(i+1)}{2^\nu}\right)$, the base of $A_{i\nu}$, we have that  
		\begin{equation} \label{bsum}
		\begin{gathered}
		\left(\frac{1}{L}\int_0^L\chi_{Y^*}(s,x_2)a_p\left(\nabla P\phi(s,x_2)+(1,0)\right) ds\right)\sum_{i\in I_\nu} a_p\left(<\partial_{x_1}w_0>_{i\nu}\right)\chi_{i\nu}(x_1)\\\rightharpoonup \left(\frac{1}{L}\int_0^L\chi_{Y^*}(s,x_2)a_p\left(\nabla P\phi(s,x_2)+(1,0)\right) ds\right)a_p(\partial_{x_1}w_0)
		\quad \textrm{ weakly in } L^{p'}(\Omega),
		\end{gathered}
		\end{equation}
		as $\nu\to \infty$.

		In fact, by Lebesgue Differentiation Theorem, we have that
		\begin{equation}\label{LDTaverage}
		\sum_{i\in I_\nu}<\partial_{x_1}w_0>_{i\nu}\chi_{i\nu}\rightarrow\partial_{x_1}w_0,  
		\end{equation}
		a.e. in $(0,1)$, where $I_\nu=\left\{0,1,\cdots, 2^\nu-1\right\}$. For the $L^p$ convergence, we just have to prove that there is a constant $K$ independent on $\nu$ such that 
		\begin{equation}\label{1684}
		\left|\left|\sum_{i\in I_\nu}<\partial_{x_1}w_0>_{i\nu}\chi_{i\nu}\right|\right|_{L^p(0,1)}\leq K.
		\end{equation}
		For all $x_1\in(0,1)$, there exists $j$ such that $x_1\in \left(\frac{j}{2^\nu},\frac{(j+1)}{2^\nu}\right)$. Then,
		\begin{equation}\label{1692}
			\sum_{i\in I_\nu}<\partial_{x_1}w_0>_{i\nu}\chi_{i\nu}(x_1)=2^\nu\int_{\frac{j}{2^\nu}}^{\frac{j+1}{2^\nu}}\partial_{x_1}w_0 dx_1.
		\end{equation}
		Thus, from Hardy-Littlewood-Wierner Theorem, 
		$$
		\int_0^1\left|\sum_{i\in I_\nu}<\partial_{x_1}w_0>_{i\nu}\chi_{i\nu}\right|^p dx_1\leq  c_p\|\partial_{x_1}w_0\|_{L^p(0,1)}^p,
		$$
		where $c_p>0$ is a constant that depends only on $p$, concluding the proof.

		Let us prove \eqref{bsum}. Using $j$ above, we get
		$$
		\begin{gathered}
		\left(\frac{1}{L}\int_0^L\chi_{Y^*}(s,x_2)a_p\left(\nabla P\phi(s,x_2)+(1,0)\right) ds\right)\sum_{i\in I_\nu} a_p(<\partial_{x_1}w_0>_{i\nu})\chi_{i\nu}(x_1)\\
		=\left(\frac{1}{L}\int_0^L\chi_{Y^*}(s,x_2)a_p\left(\nabla P\phi(s,x_2)+(1,0)\right) ds\right)a_p(<\partial_{x_1}w_0>_{j\nu}),\mbox{ a.e. }x_2\in(0,g_1).
		\end{gathered}
		$$
		Thus, by Corollary \ref{corolarioplaplaciano},
		\begin{equation*}
		\begin{gathered}
		\left|\left(\frac{1}{L}\int_0^L\chi_{Y^*}(s,x_2)a_p\left(\nabla P\phi(s,x_2)+(1,0)\right) ds\right)\sum_{i\in I_\nu} a_p(<\partial_{x_1}w_0>_{i\nu})\chi_{i\nu}(x_1)\right.\\\qquad\qquad\qquad -\left.\left(\frac{1}{L}\int_0^L\chi_{Y^*}(s,x_2)a_p\left(\nabla P\phi(s,x_2)+(1,0)\right) ds\right)a_p(\partial_{x_1}w_0)\right|\\
		\leq c \left|\left(\frac{1}{L}\int_0^L\chi_{Y^*}(s,x_2)a_p\left(\nabla P\phi(s,x_2)+(1,0)\right) ds\right)\right|\,\left|\sum_{i\in I_\nu} <\partial_{x_1}w_0>_{i\nu} \chi_{i\nu}(x_1) - \partial_{x_1}w_0(x_1)\right|^\alpha
		\rightarrow0,
		\end{gathered}
		\end{equation*}
		as  $\nu \to \infty$, for a.e. $(x_1,x_2)\in \Omega$, where $\alpha=\alpha(p)=\{1,p-1\}$. Notice also that, by \eqref{1684}, we get
		
		$$
		\begin{gathered}
		\left|\left(\frac{1}{L}\int_0^L\chi_{Y^*}(s,x_2)a_p\left(\nabla P\phi(s,x_2)+(1,0)\right) ds\right)\sum_{i\in I_\nu} a_p(<\partial_{x_1}w_0>_{i\nu})\chi_{i\nu}(x_1)\right|\\\leq c  \left|\frac{1}{L}\int_0^L\chi_{Y^*}(s,x_2)a_p\left(\nabla P\phi(s,x_2)+(1,0)\right) ds\right|,
		\end{gathered}
		$$
		where $c$ independs on $\nu$. Thus, it follows from Lebesgue Dominated Convergence Theorem that
		\begin{equation}\label{713}
		\begin{gathered}
		\int_{\Omega}\left(\frac{1}{L}\int_0^L\chi_{Y^*}(s,x_2)a_p\left(\nabla P\phi(s,x_2)+(1,0)\right) ds\right)\sum_{i\in I_\nu} a_p(<\partial_{x_1}w_0>_{i\nu})\chi_{i\nu}(x_1)\, \varphi dx_1dx_2\\
		\stackrel{\nu\to\infty}{\to} \,\int_{\Omega}\left(\frac{1}{L}\int_0^L\chi_{Y^*}(s,x_2)a_p\left(\nabla P\phi(s,x_2)+(1,0)\right) ds\right)a_p(\partial_{x_1}w_0) \varphi dx_1dx_2,
		\end{gathered}
		\end{equation}
		for any $\varphi\in L^{p}(\Omega)$, which implies \eqref{bsum}.

		\vspace{.5cm}
		
		\noindent\textbf{Auxiliar Problem}

		Take $\varphi\in W^{1,p}(\Omega^\varepsilon)$ with $\varphi=0$ in a neighborhood of the lateral boundaries of $\Omega^\varepsilon$. 
		Here we show that
		\begin{equation} \label{1104}
		\int_{\Omega^\varepsilon} \left|\nabla^\varepsilon w_{i\nu}^\varepsilon\right|^{p-2}\nabla^\varepsilon w_{i\nu}^\varepsilon\nabla^\varepsilon\varphi dx_1dx_2=0, \quad \forall \varepsilon>0. 
		\end{equation}
		Let $T^k_\varepsilon:Y^*_{k,\varepsilon}\rightarrow Y^*$ be the change of variables $T^k_\varepsilon(x_1,x_2)=\left(\frac{x_1-\varepsilon kL}{\varepsilon},x_2\right)$.  Here the set $Y^*_{k,\varepsilon}$ is such that 
		$$
		\Omega^\varepsilon=\bigcup_{k=0}^{N(\varepsilon)-1}Y^*_{k,\varepsilon}
		$$
		for some $N(\varepsilon) \in \mathbb{N}$. Next, define $T = T_{\varepsilon}^k$ for $(x_1,x_2) \in Y^*_{k,\epsilon}$.

		Notice that there are $m,n\in\{0,\dots,N(\varepsilon)-1\}$ such that $\textrm{supp }\varphi\subset \bigcup_{j=m}^n Y^*_{j,\varepsilon}$, that is, this family of $Y^*_{j,\varepsilon}$ is a finite covering of $\textrm{supp }\varphi$. Let $(\Theta_i)$, $i=m,\dots,n$ be a partition of unity associated to this covering. Then it satisfies
		$$
		\begin{gathered}
		\Theta_i\in C_0^\infty(\Omega^\varepsilon),\,0\leq\Theta_i\leq 1,\,\sum_{i=m}^{n}\Theta_i(x_1,x_2)=1,\, (x_1,x_2)\in \Omega^\varepsilon\\
		\textrm{supp }\Theta_i\subset Y^*_{i,\varepsilon}\, \quad\textrm{and}\quad \textrm{supp }\Theta_m\subset \Omega^\varepsilon\backslash\textrm{supp }\varphi.
		\end{gathered}
		$$
		Observe that 
		$$
		\varphi=\varphi\sum_{i=m}^{n}\Theta_i=\sum_{i=m+1}^{n}\varphi\Theta_i \quad\textrm{in}\quad \Omega^\varepsilon.
		$$
		because $\varphi=0$ in $\textrm{supp }\Theta_m\subset \Omega^\varepsilon\backslash\textrm{supp }\varphi$.
		
		Now, $\varphi\Theta_i$ is a function that is zero in a neighborhood of the boundaries of $Y^*_{i,\varepsilon}$, that is, $\varphi\Theta_i\circ (T_\varepsilon^k)^{-1}\in W^{1,p}_{per}(Y^*)$. Then,
		\begin{eqnarray*}
			&&\int_{\Omega^\varepsilon} \left|\nabla^\varepsilon w_{i\nu}^\varepsilon\right|^{p-2}\nabla^\varepsilon w_{i\nu}^\varepsilon\nabla^\varepsilon\varphi dx_1dx_2\\
			&=&\sum_{k=m+1}^{n}\int_{Y^*_{k,\varepsilon}} \left|\nabla^\varepsilon w_{i\nu}^\varepsilon\right|^{p-2}\nabla^\varepsilon w_{i\nu}^\varepsilon\nabla^\varepsilon(\varphi\Theta_k)  dx_1dx_2\\
			&=&\sum_{k=m+1}^{n}\frac{1}{\varepsilon^{p}}\int_{Y^*} |\nabla_y w_{i\nu}|^{p-2}\nabla_y w_{i\nu}\nabla_y\left((\varphi\Theta_k)\circ T^{-1}\right)\, \varepsilon \,dy_1dy_2\\
			&=&a_p\left(<\partial_{x_1}w_0>_{i\nu}\right)\sum_{k=m+1}^{n}\frac{1}{\varepsilon^{p-1}}\int_{Y^*} |\nabla_y v|^{p-2}\nabla_y v\nabla_y\left((\varphi\Theta_k)\circ \left(T_\varepsilon^k\right)^{-1}\right) dy_1dy_2 \\
			&=& 0.
		\end{eqnarray*}

		\vspace{.5cm}

		\noindent\textbf{Identifying the homogenized equation}

		We need to identify, for any $\varphi\in L^p(0,1)$,
		\begin{equation}
		\begin{gathered} \label{1281}
		\int_0^1\left[\int_0^{g_1}a_0(x_1,x_2)dx_2\right]( \varphi(x_1),0) dx_1 
		= \lim_{\varepsilon\rightarrow 0}\int_{\Omega}\left|\widetilde{\nabla^\varepsilon  u_\varepsilon}\right|^{p-2}\widetilde{\nabla^{\varepsilon} u_\varepsilon}
		\left(\varphi(x_1),0\right)dx_2dx_1
		\end{gathered}
		\end{equation}
		with 
		\begin{equation}
		\begin{gathered} \label{1288}
		\int_0^1 \left\{\int_0^{g_1}a_p\left[\partial_{x_1}u_0(x_1)\nabla P\phi(s,x_2)+(\partial_{x_1}u_0(x_1),0)\right] dx_2\right\}(\varphi(x_1),0) dx_1\\
		= \lim_{\nu\rightarrow \infty}\lim_{\varepsilon\rightarrow 0}\sum_{i\in I_\nu}\int_{A_{i\nu}} a^\varepsilon_{i\nu} 
		\left(\varphi(x_1),0\right)  \chi_{i\nu}(x_1)  \, dx_2dx_1.
		\end{gathered}
		\end{equation}
		Notice that \eqref{1281} and \eqref{1288} are obtained as consequence of \eqref{901}, \eqref{bsum00} and \eqref{bsum} taking $w_0 = u_0$. 
		
		Let $\eta_{i\nu}\in C_0^\infty\left(\frac{i}{2^\nu},\frac{i+1}{2^\nu}\right)$ with $0\leq \eta_{i\nu}\leq1$. 
		Take $\varphi =\eta_{i\nu} u_\varepsilon$ in \eqref{extendedvariationalproblem}. We get
		\begin{equation*}
		\int_{\Omega}a_p\left(\widetilde{\nabla^\varepsilon u_\varepsilon}\right)(\eta_{i\nu}' P_\varepsilon u_\varepsilon,0)dx_1dx_2+\int_{\Omega}\eta_{i\nu}a_p\left(\widetilde{\nabla^\varepsilon u_\varepsilon}\right)\nabla^\varepsilon u_\varepsilon+\chi_{\Omega^\varepsilon}a_p(P_\varepsilon u_\varepsilon)\eta_{i\nu} P_\varepsilon u_\varepsilon dx_1dx_2=\int_{\Omega}\chi_{\Omega^\varepsilon}f^\varepsilon \eta_{i\nu} P_\varepsilon u_\varepsilon dx_1dx_2
		\end{equation*}
		and for test functions $\varphi=\eta_{i\nu} w_{i\nu}^\varepsilon$
		\begin{equation*}
		\int_{\Omega}a_p\left(\widetilde{\nabla^\varepsilon u_\varepsilon}\right)(\eta_{i\nu}' w_{i\nu}^\varepsilon,0)dx_1dx_2+\int_{\Omega}\eta_{i\nu}a_p\left(\widetilde{\nabla^\varepsilon u_\varepsilon}\right)\nabla^\varepsilon w_{i\nu}^\varepsilon+\chi_{\Omega^\varepsilon}a_p(P_\varepsilon u_\varepsilon)\eta_{i\nu} w_{i\nu}^\varepsilon dx_1dx_2=\int_{\Omega}\chi_{\Omega^\varepsilon}f^\varepsilon \eta_{i\nu} w_{i\nu}^\varepsilon dx_1dx_2.
		\end{equation*}
		Thus, by \eqref{Pepsilonuepsilon}, \eqref{901} and \eqref{pseudolimit}, we get
		\begin{equation}\label{1818}
		\begin{gathered}
		\int_{\Omega}\eta_{i\nu}a_p\left(\widetilde{\nabla^\varepsilon u_\varepsilon}\right)\nabla^\varepsilon u_\varepsilon dx_1dx_2\to \int_0^1\eta_{i\nu} \hat{f} u dx_1-\int_\Omega \theta(x_2)a_{p}(u_0)\eta_{i\nu} u_0 dx_1dx_2-\int_{\Omega}a_0\cdot(\eta_{i\nu}'u_0,0)dx_1dx_2\\
		=\int_{\Omega}a_0\cdot(\partial_{x_{1}}(\eta_{i\nu}u_0),0)dx_1dx_2-\int_{\Omega}a_0\cdot(\partial_{x_{1}}\eta_{i\nu}u_0,0)dx_1dx_2=\int_{\Omega}\eta_{i\nu}a_0\cdot(\partial_{x_1}u_0,0)dx_1dx_2
		\end{gathered}
		\end{equation}
		and by \eqref{Pepsilonuepsilon}, \eqref{winuconv} and \eqref{pseudolimit}, we obtain
		\begin{equation}\label{1825}
		\begin{gathered}
		\int_{\Omega}\eta_{i\nu}a_p\left(\widetilde{\nabla^\varepsilon u_\varepsilon}\right)\nabla^\varepsilon w_{i\nu}^\varepsilon dx_1dx_2\to \int_{\Omega}\eta_{i\nu}a_0\cdot(<\partial_{x_1}w_0>_{i\nu},0)dx_1dx_2
		\end{gathered}
		\end{equation}
		as $\varepsilon\to 0$.
		
		On the other hand, using the definition of $a_{i\nu}^{\varepsilon}$ and taking as test functions $\varphi=\eta_{i\nu}P_{\varepsilon}u_\varepsilon-\eta_{i\nu}w_{i\nu}^\varepsilon$ in \eqref{1104}, we get
		$$
		0=\int_{\Omega}a_{i\nu}^{\varepsilon}\nabla^{\varepsilon}(\eta_{i\nu}P_{\varepsilon}u_\varepsilon-\eta_{i\nu}w_{i\nu}^\varepsilon)dx_{1}dx_{2}=\int_{\Omega^\varepsilon}a_p(\nabla^\varepsilon w_{i\nu}^\varepsilon)\nabla^\varepsilon(\eta_{i\nu}u_\varepsilon-\eta_{i\nu}w_{i\nu}^\varepsilon) dx_1dx_2, 
		$$
		which can be rewritten as follows
		\begin{equation*}
		\begin{gathered}
		\int_{\Omega^\varepsilon}\eta_{i\nu}a_{i\nu}^\varepsilon\nabla^\varepsilon( u_\varepsilon-w_{i\nu}^\varepsilon) dx_1dx_2
		=-\int_{\Omega}a_{i\nu}^\varepsilon\,(\eta_{i\nu}',0)(P_\varepsilon u_\varepsilon-w_{i\nu}^\varepsilon)dx_{1}dx_{2}\\\to-\int_{\Omega}\left[ a_p(<\partial_{x_1}w_0>_{i\nu})\dfrac{1}{L}\int_{0}^{L}\chi_{Y^*}(s,dx_2)a_p(\nabla P\phi(s,x_2)+(1,0))ds \right](\eta_{i\nu}',0)(u_0-<\partial_{x_1}w_0>_{i\nu}x_1)\,dx_1dx_2
		\end{gathered}
		\end{equation*}
		as $\varepsilon\to0$, since \eqref{bsum00}, \eqref{901} and \eqref{winuconv} hold. We rewrite the right hand side of above relation. Then,
		\begin{equation*}
		\begin{gathered}
		\int_{\Omega}\left[ a_p(<\partial_{x_1}w_0>_{i\nu})\dfrac{1}{L}\int_{0}^{L}\chi_{Y^*}(s,dx_2)a_p(\nabla P\phi(s,x_2)+(1,0))ds \right](\eta_{i\nu}',0)(u_0-<\partial_{x_1}w_0>_{i\nu}x_1)\,dx_1dx_2\\
		=\int_0^1 a_p(<\partial_{x_1}w_0>_{i\nu})\left[\dfrac{1}{L}\int_{0}^{L}\int_0^{g_1}\chi_{Y^*}(s,x_2)a_p(\nabla P\phi(s,x_2)+(1,0))ds dx_2\right](\eta_{i\nu}',0)(u_0-<\partial_{x_1}w_0>_{i\nu}x_1)\,dx_1\\
		=\dfrac{|Y^*|}{L}q\int_0^1 a_p(<\partial_{x_1}w_0>_{i\nu})\eta_{i\nu}'(u_0-<\partial_{x_1}w_0>_{i\nu}x_1)\,dx_1,
		\end{gathered}
		\end{equation*}
		where $q$ is defined by \eqref{defq}. Integrating by parts and using the fact that $\eta_{i\nu}\in C_0^\infty\left(\frac{i}{2^\nu},\frac{i+1}{2^\nu}\right)$, we get
		\begin{equation*}
		\begin{gathered}
		\int_{\Omega}\left[ a_p(<\partial_{x_1}w_0>_{i\nu})\dfrac{1}{L}\int_{0}^{L}\chi_{Y^*}(s,dx_2)a_p(\nabla P\phi(s,x_2)+(1,0))ds \right](\eta_{i\nu}',0)(u_0-<\partial_{x_1}w_0>_{i\nu}x_1)\,dx_1dx_2\\
		=-\dfrac{|Y^*|}{L}q\int_0^1 a_p(<\partial_{x_1}w_0>_{i\nu})\eta_{i\nu}(\partial_{x_1}u_0-<\partial_{x_1}w_0>_{i\nu})\,dx_1.
		\end{gathered}
		\end{equation*}
		This last equality leads us to 
		\begin{equation}\label{1859}
		\int_{\Omega}\eta_{i\nu}a_{i\nu}^\varepsilon\nabla^\varepsilon(u_\varepsilon-w_{i\nu}^\varepsilon) dx_1dx_2\to\dfrac{|Y^*|}{L}q\int_0^1 a_p(<\partial_{x_1}w_0>_{i\nu})\eta_{i\nu}(\partial_{x_1}u_0-<\partial_{x_1}w_0>_{i\nu})\,dx_1
		\end{equation}
		as $\varepsilon\to 0$.
		
		Notice that, by \eqref{aepsilon}, \eqref{Wwinu} and monotonicity, we have 
		\begin{equation}\label{2000}
		\begin{gathered}
		0\leq \sum_{i=0}^{2^\nu-1} \int_{A_{i\nu}}\eta_{i\nu}\left(\left|\widetilde{\nabla^\varepsilon  u_\varepsilon}\right|^{p-2}\widetilde{\nabla^{\varepsilon} u_\varepsilon}-a_{i\nu}^\varepsilon\right)\left(\widetilde{\nabla^\varepsilon u_\varepsilon} -W_{i\nu}^\varepsilon\right)dx_1dx_2\\
		=\sum_{i=0}^{2^\nu-1}\left[\int_{A_{i\nu}}\eta_{i\nu}a_p(\widetilde{\nabla^\varepsilon  u_\varepsilon}) \left(\widetilde{\nabla^\varepsilon u_\varepsilon} -\nabla^\varepsilon w_{i\nu}^\varepsilon\right) dx_1dx_2-\int_{A_{i\nu}\cap\Omega^\varepsilon}\eta_{i\nu}a_{i\nu}^\varepsilon\left(\nabla^\varepsilon u_\varepsilon -\nabla^\varepsilon w_{i\nu}^\varepsilon\right)dx_1dx_2\right].
		\end{gathered}
		\end{equation}
		
		Passing to the limit, as $\varepsilon\to 0$, we get, by \eqref{1818}, \eqref{1825} and \eqref{1859}, that
		\begin{equation}
		\begin{gathered}
		0\leq \sum_{i=0}^{2^\nu-1} \int_{A_{i\nu}}\eta_{i\nu}\left(\left|\widetilde{\nabla^\varepsilon  u_\varepsilon}\right|^{p-2}\widetilde{\nabla^{\varepsilon} u_\varepsilon}-a_{i\nu}^\varepsilon\right)\left(\widetilde{\nabla^\varepsilon u_\varepsilon} -W_{i\nu}^\varepsilon\right)dx_1dx_2\\
		\to \sum_{i=0}^{2^\nu-1}\left\lbrace\int_{A_{i\nu}}\eta_{i\nu}a_0\cdot(\partial_{x_1}u_0,0)dx_1dx_2-\int_{A_{i\nu}}\eta_{i\nu}a_0\cdot(<\partial_{x_1}w_0>_{i\nu},0)dx_1dx_2\right.\\
		\left.-\dfrac{|Y^*|}{L}q\int_0^1 a_p(<\partial_{x_1}w_0>_{i\nu})\eta_{i\nu}(\partial_{x_1}u_0-<\partial_{x_1}w_0>_{i\nu})\,dx_1\right\rbrace
		\end{gathered}
		\end{equation}
		and then $\eta_{i\nu}\to 1$, 
		\begin{equation}
		\begin{gathered}
		0\leq \sum_{i=0}^{2^\nu-1}\left\lbrace\int_{A_{i\nu}}a_0\cdot(\partial_{x_1}u_0,0)dx_1dx_2-\int_{A_{i\nu}} a_0\cdot(<\partial_{x_1}w_0>_{i\nu},0)dx_1dx_2\right.\\
		\left.-\dfrac{|Y^*|}{L}q\int_0^1 a_p(<\partial_{x_1}w_0>_{i\nu})(\partial_{x_1}u_0-<\partial_{x_1}w_0>_{i\nu})\,dx_1\right\rbrace
		\end{gathered}
		\end{equation}
		folllowed by $\nu\to\infty$, we have
		\begin{equation}\label{1981}
		0\leq \int_{\Omega}a_0(\partial_{x_1}u_0-\partial_{x_1}w_0,0) dx_1dx_2-\dfrac{|Y^*|}{L}q\int_0^1 a_p(\partial_{x_1}w_0)(\partial_{x_1}u_0-\partial_{x_1}w_0)\,dx_1,
		\end{equation}
		where this last convergence is obtained by \eqref{713} and \eqref{LDTaverage}.

		Now, by \eqref{prehomogenized}, we get 
		\begin{equation}\label{quasiend}
		0\leq - \frac{|Y^*|}{L} \int_0^1a_p(u_0) (u_0-w_0) dx_1   +\int_0^1  \hat f(x_1) (u_0-w_0) dx_1-\dfrac{|Y^*|}{L}q\int_0^1 a_p(\partial_{x_1}w_0)(\partial_{x_1}u_0-\partial_{x_1}w_0)\,dx_1.
		\end{equation}
		Since $w_0\in W^{1,p}(0,1)$ is an arbitrary function,  let us take $w_0=u_0-\lambda\psi$ in \eqref{quasiend}, 
		with $\lambda>0$. If we divide \eqref{quasiend} by $\lambda$,  we get that 
		\begin{eqnarray}
		\nonumber&&\dfrac{|Y^*|}{L}q\int_0^1 a_p(\partial_{x_1}u_0-\lambda \partial_{x_1}\psi)(\partial_{x_1}u_0-\partial_{x_1}w_0)\,dx_1\leq \int_0^1\left(\hat{f}-\frac{|Y^*|}{L}a_p(u_0)\right)\psi dx_1.
		\end{eqnarray}
		On another hand, if we take $w_0=u_0+\lambda\psi$ with $\lambda>0$,  we obtain
		\begin{eqnarray}
		\nonumber&&\dfrac{|Y^*|}{L}q\int_0^1 a_p(\partial_{x_1}u_0-\lambda \partial_{x_1}\psi)\partial_{x_1}\psi\,dx_1\geq \int_0^1\left(\hat{f}-\frac{|Y^*|}{L}a_p(u_0)\right)\psi dx_1.
		\end{eqnarray}
		Therefore, making $\lambda\rightarrow 0$ in the previous inequalities, we get that 
		\begin{eqnarray}\label{problemahomogenizadofinal}
		\dfrac{|Y^*|}{L}q\int_0^1 a_p(\partial_{x_1}u_0)\partial_{x_1}\psi\,dx_1= \int_0^1\left(\hat{f}-\frac{|Y^*|}{L}a_p(u_0)\right)\psi dx_1.
		\end{eqnarray} 
		
		Rewriting \eqref{problemahomogenizadofinal}, we get
		\begin{equation*}  
		q\int_0^1 \,|\partial_{x_1}u_0|^{p-2}\partial_{x_1}u_0 \partial_{x_1}\psi dx_1+\int_{0}^1 |u_0|^{p-2}u_0 \psi dx_1=\int_0^1\overline{f}\psi dx_1,
		\end{equation*}
		for all $\psi\in W^{1,p}(0,1)$, where 
		\begin{equation*}
		\overline{f}=\frac{L}{|Y^*|}\hat{f}.
		\end{equation*}

	\end{proof}

	\section{Corrector Result}
	
	In this section we introduce a corrector to the problem \eqref{problemchanged}.
	According to \cite{MasoA}, since we already have 
	$$P_\varepsilon u_\varepsilon \rightarrow u_0, \quad \textrm{ strongly in } L^p(\Omega),$$
	we just need to construct the corrector to the term $\nabla^\varepsilon u_\varepsilon$.  
	
	For this sake, consider the partition $\left\{A_{i\nu}\right\}$ introduced in Section 3 and defined by \eqref{Anu}. 
	Let $M_\nu$ be the following family of functions 
	\begin{equation*}
	M_\nu\varphi(x_{1})=\sum_{k=0}^{2^\nu-1}\chi_{i\nu}(x_{1})<\varphi>_{i\nu}, \quad \varphi\in L^p(\Omega^\varepsilon).
	\end{equation*}
	Arguing as in \eqref{bsum}, we can show that 
	$$M_\nu\varphi\rightarrow \varphi \quad \textrm{ for all } \varphi \in L^p(\Omega^\varepsilon)$$
	and then, we can say that $M_\nu$ is an approximation to the identity map in $L^p(\Omega^\varepsilon)$. 
	
	Now, let $\phi$ be such that $v=\phi+y_1$ is the solution of the auxiliar problem \eqref{auxint}. Extend $P\phi$ periodically in the first variable and consider 
	\begin{equation*}
	W_{i\nu}^\varepsilon(x_1,x_2)=< \partial_{x_1}u_0>_{i\nu}\left[\nabla P\phi\left(\frac{x_1}{\varepsilon},x_2\right)+\left(1,0\right)\right]\textrm{ in }\Omega.
	\end{equation*}
	It follows from \eqref{winuconv} and \eqref{Wwinu} that 
	$$
	W_{i\nu}^\varepsilon\stackrel{\varepsilon\rightarrow 0}{\rightharpoonup}\left(< \partial_{x_1}u_0>_{i\nu},0\right) 
	\quad \textrm{ weakly in } L^p(\Omega) \times L^p(\Omega).
	$$
	
	Here, we combine $M_\nu$ and $W_{i\nu}^\varepsilon$ to introduce our corrector function by the expression 
	\begin{equation}\label{corrector}
	c_{i\nu}^\varepsilon(x_1,x_2)=\sum_{k=0}^{2^\nu-1} \chi_{i\nu}(x_1)W_{i\nu}^\varepsilon(x_1,x_2)\textrm{ in }\Omega.
	\end{equation}
	
	First, we see that our corrector weakly approximate $\nabla u_0$. Indeed, for all $\varphi\in L^{p'}(\Omega)^2$, we have 
	\begin{eqnarray*} 
	\int_{\Omega}c_{i\nu}^\varepsilon\varphi & = & \sum_{i=0}^{2^\nu-1}\int_{A_{i\nu}} W_{i\nu}^\varepsilon\varphi 
	 \stackrel{\varepsilon\rightarrow 0}{\rightarrow}  \sum_{i=0}^{2^\nu-1}\int_{A_{i\nu}} \left(< \partial_{x_1}u_0>_{i\nu},0\right)\varphi \\
	& \stackrel{\nu\rightarrow\infty}{\rightarrow} & \int_{\Omega}\nabla u_0\varphi.
	\end{eqnarray*}

	Next, let us show the strong convergence to $\nabla u_0$. For that, let us take $\eta\in C_{0}^{\infty}(0,1)$ with $0\leq \eta \leq 1$, and set the following notation to simplify our arguments: $d\mu=\eta dx_1dx_2$.
	By Proposition \ref{proposicaoplaplaciano}, if $p\geq 2$,
	\begin{equation}\label{corretorineqp>2}
		\int_{\Omega^\varepsilon}\left|c_{i\nu}^\varepsilon-\nabla^\varepsilon u_\varepsilon\right|^pd\mu\leq c\int_{\Omega^\varepsilon} \left(\left|c_{i\nu}^\varepsilon\right|^{p-2}c_{i\nu}^\varepsilon-\left|\nabla^\varepsilon u_\varepsilon\right|^{p-2}\nabla^\varepsilon u_\varepsilon\right)\left(c_{i\nu}^\varepsilon-\nabla^\varepsilon u_\varepsilon\right)d\mu.
	\end{equation}
	
	For $1<p<2$, it follows from H\"older's Inequality and Proposition \ref{proposicaoplaplaciano} that 
	\begin{eqnarray}\label{correctorineqp<2}
		&&\int_{\Omega^\varepsilon} \nonumber \left|c_{i\nu}^\varepsilon-\nabla^\varepsilon u_\varepsilon\right|^pd\mu\\\nonumber&=&\int_{\Omega^\varepsilon}\left|c_{i\nu}^\varepsilon-\nabla^\varepsilon u_\varepsilon\right|^p\frac{\left(\left|c_{i\nu}^\varepsilon\right|+\left|\nabla^\varepsilon u_\varepsilon\right|\right)^{\frac{(p-2)p}{2}}}{\left(\left|c_{i\nu}^\varepsilon\right|+\left|\nabla^\varepsilon u_\varepsilon\right|\right)^{\frac{(p-2)p}{2}}}d\mu\\
	\nonumber&\leq&\left(\int_{\Omega^\varepsilon}\left|c_{i\nu}^\varepsilon-\nabla^\varepsilon u_\varepsilon\right|^2 \left(\left|c_{i\nu}^\varepsilon\right|+\left|\nabla^\varepsilon u_\varepsilon\right|\right)^{p-2}d\mu\right)^{\frac{p}{2}} \left(\int_{\Omega^\varepsilon}\left(\left|c_{i\nu}^\varepsilon\right|+\left|\nabla^\varepsilon u_\varepsilon\right|\right)^{p}d\mu\right)^{\frac{2-p}{2}}\\
		\nonumber&\leq&c\left(\int_{\Omega^\varepsilon}\left(
		\left|c_{i\nu}^\varepsilon\right|^{p-2}c_{i\nu}^\varepsilon
		-\left|\nabla u_\varepsilon\right|^{p-2}\nabla u_\varepsilon\right)\left(c_{i\nu}^\varepsilon-\nabla^\varepsilon u_\varepsilon\right)d\mu\right)^{\frac{p}{2}} \left(\int_{\Omega^\varepsilon}\left(\left|c_{i\nu}^\varepsilon\right|+\left|\nabla^\varepsilon u_\varepsilon\right|\right)^{p}d\mu\right)^{\frac{2-p}{2}}\\
		&\leq&c\left(\int_{\Omega^\varepsilon}\left(\left|c_{i\nu}^\varepsilon\right|^{p-2}c_{i\nu}^\varepsilon-\left|\nabla^\varepsilon u_\varepsilon\right|^{p-2}\nabla^\varepsilon u_\varepsilon\right)\left(c_{i\nu}^\varepsilon- \nabla^\varepsilon u_\varepsilon \right)d\mu\right)^{\frac{p}{2}}
	\end{eqnarray}
	since $c_{i\nu}^\varepsilon$ and $\nabla^\varepsilon u_\varepsilon$ are uniformly bounded.
	
	In this way, one just need to pass to the limit, as $\varepsilon\to0$, to the term
	\begin{equation}\label{almostcorrector}
	\begin{gathered}
	\int_{\Omega^\varepsilon}\left(\left|c_{i\nu}^\varepsilon\right|^{p-2}c_{i\nu}^\varepsilon-\left|\nabla^\varepsilon u_\varepsilon\right|^{p-2}\nabla^\varepsilon u_\varepsilon\right)\left(c_{i\nu}^\varepsilon- \nabla^\varepsilon u_\varepsilon \right)d\mu\\=\int_{\Omega^\varepsilon}\eta\left(\left|c_{i\nu}^\varepsilon\right|^{p-2}c_{i\nu}^\varepsilon-\left|\nabla^\varepsilon u_\varepsilon\right|^{p-2}\nabla^\varepsilon u_\varepsilon\right)\left(c_{i\nu}^\varepsilon- \nabla^\varepsilon u_\varepsilon \right)dx_1dx_2.
	\end{gathered}
	\end{equation}
	In fact, since   
		\begin{equation} \label{equalitiescorrector}
		\begin{gathered}
		\int_{\Omega^\varepsilon}\eta\left|c_{i\nu}^\varepsilon\right|^{p-2}c_{i\nu}^\varepsilon c_{i\nu}^\varepsilon dx_1dx_2
		=\sum_{i=0}^{2^\nu-1}\int_{\Omega^\varepsilon\cap A_{i\nu}}\eta\left|W_{i\nu}^\varepsilon\right|^{p-2}W_{i\nu}^\varepsilon W_{i\nu}^\varepsilon  dx_1dx_2\\
		\int_{\Omega^\varepsilon}\eta\left|c_{i\nu}^\varepsilon\right|^{p-2}c_{i\nu}^\varepsilon \nabla^\varepsilon u_\varepsilon dx_1dx_2
		=\sum_{i=0}^{2^\nu-1}\int_{\Omega^\varepsilon\cap A_{i\nu}}\eta\left|W_{i\nu}^\varepsilon\right|^{p-2}W_{i\nu}^\varepsilon \nabla^\varepsilon u_\varepsilon  dx_1dx_2\\
		\int_{\Omega^\varepsilon}\eta\left|\nabla^\varepsilon u_\varepsilon\right|^{p-2}\nabla^\varepsilon u_\varepsilon c_{i\nu}^\varepsilon dx_1dx_2
		=\sum_{i=0}^{2^\nu-1}\int_{\Omega^\varepsilon\cap A_{i\nu}}\eta\left|\nabla^\varepsilon u_\varepsilon\right|^{p-2}\nabla^\varepsilon u_\varepsilon W_{i\nu}^\varepsilon  dx_1dx_2,
	\end{gathered}
	\end{equation}
	we can put together \eqref{almostcorrector} and \eqref{equalitiescorrector} to pass to the limit in 
	\begin{equation}\label{almostcorrector01}
	\begin{gathered}
	\int_{\Omega^\varepsilon}\left(\left|c_{i\nu}^\varepsilon\right|^{p-2}c_{i\nu}^\varepsilon-\left|\nabla^\varepsilon u_\varepsilon\right|^{p-2}\nabla^\varepsilon u_\varepsilon\right)\left(c_{i\nu}^\varepsilon- \nabla^\varepsilon u_\varepsilon \right)d\mu \\ 
	= \sum_{i=0}^{2^\nu-1}\int_{\Omega^\varepsilon\cap A_{i\nu}}\eta\left(\left|W_{i\nu}^\varepsilon\right|^{p-2}W_{i\nu}^\varepsilon - \left|\nabla^\varepsilon u_\varepsilon\right|^{p-2}\nabla^\varepsilon u_\varepsilon\right)(W_{i\nu}^\varepsilon -\nabla^\varepsilon u_\varepsilon)dx_{1}dx_{2}.
	\end{gathered}
	\end{equation}
	It envolves the same arguments as we did to pass to the limit in \eqref{2000} in order to obtain \eqref{1981}.

	Indeed, taking in account \eqref{Wwinu}, \eqref{almostcorrector}, \eqref{almostcorrector01}, \eqref{corretorineqp>2} and \eqref{correctorineqp<2}, we conclude  
	\begin{equation*}
	\lim_{\nu\rightarrow\infty}\lim_{\varepsilon\rightarrow 0}\int_{\Omega^\varepsilon}\left|c_{i\nu}^\varepsilon-\nabla^\varepsilon u_\varepsilon\right|^pd\mu=0.
	\end{equation*} 
	
	Therefore, we can state the following corrector result concerning to the asymptotic behavior of $u_\varepsilon$:
	\begin{prop}
		Let $(u_\varepsilon)$ be the sequence of solutions given by problem \eqref{problem}, and $(c_{i\nu}^\varepsilon)$ be the functions defined by \eqref{corrector}. 
		Then, 
		\begin{equation*}
		\lim_{\nu\rightarrow\infty}\lim_{\varepsilon\rightarrow 0}\left|\left|c_{i\nu}^\varepsilon-\nabla^\varepsilon u_\varepsilon\right|\right|_{L^{p}(\Omega^\varepsilon)}=0.
		\end{equation*} 
	\end{prop}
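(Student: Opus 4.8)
The plan is to establish the corrector estimate by reducing, via \eqref{corretorineqp>2} and \eqref{correctorineqp<2}, the claim to showing that the ``monotonicity remainder''
$$
E_{\nu\varepsilon} := \int_{\Omega^\varepsilon}\left(\left|c_{i\nu}^\varepsilon\right|^{p-2}c_{i\nu}^\varepsilon-\left|\nabla^\varepsilon u_\varepsilon\right|^{p-2}\nabla^\varepsilon u_\varepsilon\right)\left(c_{i\nu}^\varepsilon- \nabla^\varepsilon u_\varepsilon \right)\, d\mu
$$
satisfies $\lim_{\nu\to\infty}\lim_{\varepsilon\to 0} E_{\nu\varepsilon} = 0$. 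By \eqref{corrector} and \eqref{Wwinu}, on $\Omega^\varepsilon \cap A_{i\nu}$ we have $c_{i\nu}^\varepsilon = W_{i\nu}^\varepsilon = \nabla^\varepsilon w_{i\nu}^\varepsilon$, so by \eqref{almostcorrector01} the quantity $E_{\nu\varepsilon}$ coincides with the sum over $i$ of integrals of $\eta\left(\left|\nabla^\varepsilon w_{i\nu}^\varepsilon\right|^{p-2}\nabla^\varepsilon w_{i\nu}^\varepsilon - \left|\nabla^\varepsilon u_\varepsilon\right|^{p-2}\nabla^\varepsilon u_\varepsilon\right)\left(\nabla^\varepsilon w_{i\nu}^\varepsilon -\nabla^\varepsilon u_\varepsilon\right)$ over $A_{i\nu}\cap\Omega^\varepsilon$. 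This is, up to the sign and the role of $\eta$ versus $\eta_{i\nu}$, exactly the integrand appearing in \eqref{2000}.

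First I would expand $E_{\nu\varepsilon}$ into the four pairings $\langle a_p(c^\varepsilon),c^\varepsilon\rangle$, $\langle a_p(c^\varepsilon),\nabla^\varepsilon u_\varepsilon\rangle$, $\langle a_p(\nabla^\varepsilon u_\varepsilon),c^\varepsilon\rangle$, $\langle a_p(\nabla^\varepsilon u_\varepsilon),\nabla^\varepsilon u_\varepsilon\rangle$ using \eqref{equalitiescorrector} to rewrite each as a sum over $A_{i\nu}$ with $W_{i\nu}^\varepsilon$ in place of $c_{i\nu}^\varepsilon$. For the terms not involving $\nabla^\varepsilon u_\varepsilon$ quadratically, I would pass to the limit $\varepsilon\to 0$ using the weak convergences already available: $\widetilde{a_p(\nabla^\varepsilon u_\varepsilon)}\rightharpoonup a_0$ from \eqref{conva00}, the strong convergence of $a^\varepsilon_{i\nu}$-type quantities from \eqref{bsum00}, and the fact (from \eqref{wepsi}, \eqref{winuconv}, \eqref{Wwinu}) that $W_{i\nu}^\varepsilon$ oscillates with a known weak limit. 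The genuinely delicate term is $\int \eta\, |\nabla^\varepsilon u_\varepsilon|^{p-2}\nabla^\varepsilon u_\varepsilon\cdot\nabla^\varepsilon u_\varepsilon$, a product of two weakly convergent sequences; this is handled exactly as in the main theorem via the equation. Namely, I would use the variational formulation \eqref{variacionalproblemchanged} with test function $\eta\, u_\varepsilon$ (legitimate since $\eta\in C_0^\infty(0,1)$, hence vanishing near the lateral boundary), producing $\int\eta|\nabla^\varepsilon u_\varepsilon|^p = \int\eta(f^\varepsilon - |u_\varepsilon|^{p-2}u_\varepsilon)u_\varepsilon - \int u_\varepsilon|\nabla^\varepsilon u_\varepsilon|^{p-2}\nabla^\varepsilon u_\varepsilon\cdot\nabla^\varepsilon\eta$; then pass to the limit using \eqref{901}, \eqref{Pepsilonuepsilon}, and \eqref{conva00}, and finally let $\nu\to\infty$ (sending $\eta\uparrow 1$ on a fixed subinterval, or rather summing over the partition with $\eta_{i\nu}\to 1$ as in the passage from \eqref{2000} to \eqref{ineqprehomogenized}). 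The cross terms $\langle a_0, \nabla u_0\rangle$ that appear in the limit are then reconciled using the homogenized identity $\int_0^{g_1} a_0\cdot(1,0)\,dx_2 = q\frac{|Y^*|}{L}|u_0'|^{p-2}u_0'$ established at the end of the proof of Theorem \ref{maintheorem}, together with \eqref{LDTaverage} and \eqref{bsum} to replace $\langle\partial_{x_1}u_0\rangle_{i\nu}$ by $\partial_{x_1}u_0$ in the $\nu\to\infty$ limit.

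The main obstacle I anticipate is bookkeeping the double limit carefully: one must pass $\varepsilon\to 0$ first (with $\nu$ fixed, so the partition $\{A_{i\nu}\}$ is fixed and $W_{i\nu}^\varepsilon$ has a clean two-scale structure), collect a $\nu$-dependent expression that is a sum over $A_{i\nu}$ of products of averages $\langle\partial_{x_1}u_0\rangle_{i\nu}$ with cell integrals, and only then let $\nu\to\infty$ using Lebesgue differentiation \eqref{LDTaverage} and the dominated-convergence argument already carried out for \eqref{bsum}. A secondary point requiring care is that in \eqref{correctorineqp<2} (the range $1<p<2$) the estimate only yields $\int|c_{i\nu}^\varepsilon-\nabla^\varepsilon u_\varepsilon|^p\,d\mu \le c\, E_{\nu\varepsilon}^{p/2}$, so it suffices that $E_{\nu\varepsilon}\to 0$ in the iterated limit, which is what the above delivers; the uniform bounds on $c_{i\nu}^\varepsilon$ and $\nabla^\varepsilon u_\varepsilon$ needed to drop the second Hölder factor follow from \eqref{limitacaopuepsilon}, Lemma \ref{lema3.1} applied to $\phi$, and the uniform boundedness of $\langle\partial_{x_1}u_0\rangle_{i\nu}$. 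Once $\lim_{\nu}\lim_{\varepsilon}\int_{\Omega^\varepsilon}|c_{i\nu}^\varepsilon-\nabla^\varepsilon u_\varepsilon|^p\eta\,dx_1dx_2 = 0$ is known for every $\eta\in C_0^\infty(0,1)$ with $0\le\eta\le 1$, a final exhaustion of $(0,1)$ by such $\eta$ upgrades this to the full norm on $\Omega^\varepsilon$, giving the stated conclusion.
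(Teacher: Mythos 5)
Your reduction of the claim to the monotonicity remainder $E_{\nu\varepsilon}$ via \eqref{corretorineqp>2} and \eqref{correctorineqp<2}, and the rewriting via \eqref{equalitiescorrector}--\eqref{almostcorrector01}, are exactly what the paper does; from there the paper simply invokes ``arguing as in \eqref{2000} to show \eqref{ineqprehomogenized}.'' That argument keeps the monotone form \emph{bundled}: one inserts $-\nabla u_0+(\langle\partial_{x_1}u_0\rangle_{i\nu},0)$ inside the second bracket and performs a single integration by parts so that, after using \eqref{problemchanged} and \eqref{div00}, the leading term is paired against $P_\varepsilon u_\varepsilon-w^\varepsilon_{i\nu}-u_0+\langle\partial_{x_1}u_0\rangle_{i\nu}x_1\to0$ strongly in $L^p$, and the residual term converges weakly to $\int(a_0-\text{cell average})\cdot(\nabla u_0-\nabla u_0)=0$ once $w_0=u_0$, $\nu\to\infty$. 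Your alternative --- expanding $E_{\nu\varepsilon}$ into the four pairings and computing each limit separately, with the diagonal $\int\eta|\nabla^\varepsilon u_\varepsilon|^p$ handled by testing \eqref{variacionalproblemchanged} against $\eta u_\varepsilon$ --- is a legitimate and more explicit variant of the same energy/Minty method, at the cost of having to reconcile the four limits through the homogenized identity, a step the bundled form sidesteps because the subtraction makes the limit vanish trivially.

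Two caveats if you pursue your version. First, the cross terms $\int\eta\,a_p(c^\varepsilon_{i\nu})\cdot\nabla^\varepsilon u_\varepsilon$ and $\int\eta\,a_p(\nabla^\varepsilon u_\varepsilon)\cdot c^\varepsilon_{i\nu}$ are each a product of two \emph{weakly} convergent sequences --- \eqref{bsum00} gives only weak $L^{p'}_{loc}$ convergence of $a^\varepsilon_{i\nu}$, not strong, contrary to what you write --- so they need the same div-curl/equation machinery as the diagonal (for the first, $\Delta_p^{\varepsilon^2}w^\varepsilon_{i\nu}=0$; for the second, the equation for $u_\varepsilon$), not merely ``weak convergences already available.'' Second, since $c^\varepsilon_{i\nu}=\sum_i\chi_{i\nu}W^\varepsilon_{i\nu}$ is only \emph{piecewise} a gradient, these integrations by parts must be carried out cell by cell with cut-offs $\eta_{i\nu}$ compactly supported inside each $A_{i\nu}$ and then $\eta_{i\nu}\to1$, exactly as in \eqref{2000}; a single global $\eta\in C_0^\infty(0,1)$ does not annihilate the boundary contributions at the interfaces $\partial A_{i\nu}$. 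You do flag this second point at the end, so once the first is corrected the plan is sound and consistent with the paper's.
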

	
	\vspace{0.3 cm}

	{\bf Acknowledgements.} 
	
	The first author (MCP) is partially supported by CNPq 303253/2017-7 and FAPESP 2017/02630-2 (Brazil) and the second one (JCN) by CNPq Scholarship 141675/2015-2 (Brazil). 
	
	The authors would like to thank the anonymous referees whose comments have considerably improved the writing of the paper.

	\vspace{0.3 cm}

\end{document}